\newtheorem{thm}{Theorem}[section]
\newtheorem{lem}[thm]{Lemma}
\newtheorem{prop}[thm]{Proposition}
\newtheorem{cor}[thm]{Corollary}
\theoremstyle{definition}
\newtheorem{defn}[thm]{Definition}
\theoremstyle{remark}
\newtheorem{rem}[thm]{Remark}
\newtheorem{notation}[thm]{Notation}
\newcommand{\caselist}[2]{\noindent\textsc{Case #1:} #2}
\newcommand{\mathand}{\qquad\hbox{and}\qquad}
\newcommand{\F}{\mathbb{F}}
\def\[#1\]{\begin{align*}#1\end{align*}}
\def\fp{\mathbb{F}_p}
\def\pl{{p^\ell}}
\def\fpxl{\fp[x]/x^\pl}
\def\Tor{\mathrm{Tor}}
\def\xra{\xrightarrow}
\def\Z{\mathbb{Z}}
\def\F{\mathbb{F}}
\def\leq{\leqslant}
\def\geq{\geqslant}
\def\inc{\mathrm {inc}}
\def\ra{\rightarrow}
\def\B{{\sf B}}
\def\HH{{\sf HH}}
\def\THH{{\sf THH}}
\def\TAQ{{\sf TAQ}}
\def\id{\mathrm{id}}
\def\sgn{\mathrm{sgn}}
\newcommand{\G}{\Gamma}
\begin{document}
\title{On the higher topological Hochschild homology of $\F_p$ and
 commutative $\F_p$-group algebras}
\author{Irina Bobkova}
\address{Mathematics Department, Northwestern University, 2033
  Sheridan Road, Evanston, IL 60208-2730, USA}
\email{bobkova@math.northwestern.edu }

\author{Ayelet Lindenstrauss}
\address{Mathematics Department, Indiana University, Bloomington, IN 47405, USA}
\email{alindens@indiana.edu}

\author{Kate Poirier}
\address{Mathematics Department, New York City
College of Technology, CUNY, 300 Jay Street, Brooklyn, NY 11201, USA}
\email{kpoirier@citytech.cuny.edu}

\author{Birgit Richter}
\address{Fachbereich Mathematik der Universit\"at Hamburg,
Bundesstra{\ss}e 55, 20146 Hamburg, Germany}
\email{birgit.richter@uni-hamburg.de}

\author{Inna Zakharevich}
\address{School of Mathematics, Institute for Advanced Study, 
  Einstein Drive, Princeton, NJ 08540, USA}
\email{zakharevich@ias.edu}

\date{\today}
\keywords{higher THH, higher Hochschild homology, stabilization,
  B\"okstedt spectral sequence}
\subjclass[2000]{Primary 18G60; Secondary 55P43}
\begin{abstract}
We extend Torleif Veen's calculation of higher topological Hochschild
homology $\THH^{[n]}_*(\F_p) $ from $n\leq 2p$ to $n\leq 2p+2$ for $p$
odd, and from $n=2$   to $n\leq 3$ for $p=2$.
We calculate higher Hochschild homology $\HH_*^{[n]}(k[x])$ over $k$
for any integral domain $k$, and $\HH_*^{[n]}(\fp[x]/x^{p^\ell})$ for
all $n>0$.
We use this and \'etale
descent to calculate $\HH_*^{[n]}(\fp[G])$ for all $n>0$ for any
cyclic group $G$, and therefore also for any finitely generated 
abelian group $G$.
We show a splitting result for higher $\THH$ of
commutative $\F_p$-group algebras and use this technique to calculate higher
topological Hochschild homology of such group
algebras for as large an $n$ as $\THH^{[n]}_*(\F_p) $ is known for.
\end{abstract}
\thanks{The last named author was partially supported by the Institute
  for Advanced Study}
\maketitle

\section{Introduction}
Given a commutative ring $R$ and an $R$-module $M$, Jean-Louis Loday
introduced a functor $\mathcal{L}(R,M)$ which takes a based simplicial
set $X.$ to the simplicial $R$-module which consists in degree $n$ of
$M$ tensored with one copy of $R$ for each element in $X_n\setminus\{
* \}$.  The homotopy groups of the image of the Loday functor turn out
to be independent of the simplicial structure used for $X.$; they depend only
on its homotopy type.

Applying
this functor to the usual simplicial model of $\mathbb{S}^1$ with one
non-degenerate
$0$-cell and one non-degenerate $1$-cell, we get the classical
Hochschild complex whose homology is $\HH_*(R;M)$.  Extending this,
the higher topological Hochschild homology groups $\HH^{[n]} _*(R;M)$
were defined by Teimuraz Pirashvili \cite{P} as the homotopy groups of
$\mathcal{L}(R,M)$ evaluated on $\mathbb{S}^n$.

Morten Brun, Gunnar Carlsson, and Bj{\o}rn Dundas introduced a
topological version
of $\mathcal{L}(R,M)$ for a ring spectrum $R$ and an $R$-module
spectrum $M$ \cite{bcd}.
When evaluated on $\mathbb{S}^n$, it yields the spectrum $\THH^{[n]} (R;M)$,
the higher topological Hochschild homology of $R$ with coefficients in
$M$. For $M=R$ with the obvious action by
multiplication  $M$ is omitted from the notation.

\smallskip
Higher (topological) Hochschild homology features in several different
contexts.
There are stabilization maps in the algebraic context
$$ \HH^{[1]}_*(R) \ra  \HH^{[2]}_{*+1}(R) \ra \ldots \ra H\Gamma_{*-1}(R)$$
starting with Hochschild homology and ending with Gamma homology in the sense
of Alan Robinson and Sarah Whitehouse \cite{RW}. In the topological setting
they start
with $\THH(R)$ and end in topological Andr\'e-Quillen homology, $\TAQ(R)$,
$$\THH^{[1]}_*(R) \ra \THH^{[2]}_{*+1}(R) \ra \ldots \ra \TAQ_{*-1}(R).$$ 
The $k$-invariants of commutative ring spectra live in topological
Andr\'e-Quillen cohomology \cite{Ba} and obstructions for
$E_\infty$-ring structures on spectra live in Gamma cohomology
\cite{Ro,GH}, so these two cohomology theories are of great interest.

The evaluation of the Loday functor on higher dimensional tori is the same as
iterated topological Hochschild homology and this features in the
program for detecting red-shift in algebraic K-theory. Calculations of iterated
topological Hochschild homology use higher $\THH$ as an important ingredient.

Work of Benoit Fresse \cite{F} identifies Hochschild homology of order $n$
(in the
disguise of $E_n$-homology) with the homology groups of an algebraic $n$-fold
bar construction, thus $\HH^{[n]}_{*}(R)$ can be viewed as the homology of an
$n$-fold algebraic delooping. 

\smallskip

In his thesis Torleif Veen \cite{Vthesis,V} used a decomposition result for
$\mathcal{L}(R,M)$ to calculate $\THH^{[n]}_*(\F_p)=\pi_*( \THH^{[n]} (\F_p))$
for all $n\leq 2p$ and any odd prime $p$. 
For small $n$ such calculations were earlier done by John Rognes. 
Veen inductively sets up a
spectral sequence of Hopf algebras calculating
 $\THH^{[n]} _*(\F_p)$ from  $\THH^{[n-1]} _*(\F_p)$ with the base
 case  $\THH^{[1]} _*(\F_p)$ being known by work of Marcel B\"okstedt
 \cite{B}. 
 Veen explains why the spectral sequence has to collapse for $n\leq
 2p$.  By a careful 
 analysis of the structure of the spectral sequence, motivated by
 computer calculations, we show that it actually collapses 
for $n\leq 2p+2$ (Proposition \ref{prop:pushing}),
 thus getting a calculation of
 $\THH^{[n]} _*(\F_p)$ for those $n$.  The computer analysis  also found potential nontrivial differentials in the spectral sequence
 when $n=2p+3$.
 We actually believe that the differential will end up vanishing for
 all $n$.  We intend to return to this question in a future paper with
 Maria Basterra and Michael Mandell.   At $p=2$ Veen calculates 
 $\THH^{[n]}_*(\F_2)$ up to
 $n=2$. We include the $n=3$ case and also show that the generator in
$\THH_2(\F_2)$ stabilizes to a non-trivial element in the first
topological Andr\'e-Quillen homology group of $\F_2$ (Proposition
\ref{prop:stable}).

 We prove that for an $\F_p$-algebra $A$ and an  abelian group $G$,
$$\THH^{[n]}_*(A[G]) \cong \THH^{[n]}_* (A)\otimes \HH^{[n]}_* (\F_p[G]).$$
Using this, we calculate $\THH^{[n]}_*(\F_p[G])$ for any finitely
generated abelian group $G$ for $n\leq 2p+2$.  To extend this to
general abelian groups, observe that higher Hochschild homology
commutes with direct
limits. 

The actual calculations of higher Hochschild homology that we do are
of  $\HH^{[n]}_*(\fp[x])$ and of $\HH^{[n]}_*(\fp[x]/x^m)$ for any
  $m$. 

\medskip
We thank the Clay Mathematical Institute and the Banff
International Research Station for their support and hospitality.  We
would like to thank Michael Mandell for a very useful
conversation, and the referee for her or his careful reading which
caught an embarrassing blunder in an earlier draft of the paper. Our
warm thanks go the organizers of the BIRS workshop 
\emph{WIT: Women in Topology 2013}, Maria Basterra, Kristine Bauer,
Kathryn Hess and Brenda Johnson.
\section{Comparing the bar construction and its homology for some
  basic algebras }\label{sec:lemmas} 

We consider the two-sided bar construction $\B(k,A,k)$ where $k$ is a
commutative ring and $A=k[x]$ or $A=k[x]/x^m$.  The generator $x$ will be
allowed to be of any even degree; if $A=k[x]/x^2$ or $2=0$ in $k$, $x$ can be
of any degree.  Note that since $k$ is commutative, $A$ is also a graded
commutative ring, and so $\B(k,A,k)$ is a differential graded
augmented commutative $k$-algebra, with multiplication given by the
shuffle product. 

 Our goal in this section is to establish quasi-isomorphisms between
$\B(k,A,k)$ and its homology ring $\Tor^A_*(k,k)$ which are maps of
differential graded augmented $k$-algebras.  (We use the zero differential
on the homology ring.)  The quasi-isomorphisms are adapted from \cite{LL},
where similar maps are studied on the Hochschild complex for variables $x$
which have to be of degree
zero, but may satisfy other monic polynomial equations.  The reason that we
need these quasi-isomorphisms is that in Section \ref{sec:hhh} we will be
looking at iterated bar constructions of the form $\B(k, \B(k,A,k), k)$.  If
we know that there is some differential graded algebra $C$ with
quasi-isomorphisms that are algebra maps between $\B(k,A,k)$ and $C$, we
then get quasi-isomorphisms that are algebra maps between $\B(k, \B(k,A,k),
k)$ and $\B(k, C, k)$.
In the cases we study, the rings $C=\Tor^A_*(k,k)$ are very simple, and in
fact involve rings of the form of the $A$'s we deal with in this section, or
tensor products of them. Thus the $\B(k, C, k)$ can again be compared to
simpler graded algebras, and the process can continue.

The following propositions also re-prove what $\Tor^A_*(k,k)$  is for the
$A$'s we are interested in, but those are old and familiar results; our
motivation is understanding the bar complex $\B(k,A,k)$ as a differential
graded algebra, not just its homology ring.

We will assume that
our ground ring $k$ is an integral domain to simplify the proofs -- in
this paper we will only use the calculations for $k=\fp$.

 \medskip
 We will use the notation $\Lambda(y)=k[y]/y^2$ for the exterior
 algebra on $y$ over $k$, and $\Gamma(y)$ for the divided power
 algebra on $y$ over $k$, spanned over $k$ by elements
 $\gamma_{i}(y)$, $i\geq 0$, with  $\gamma_{i}(y)
 \cdot\gamma_{j}(y)=\binom{i+j}{i} \gamma_{i+j}(y)$.
\begin{prop}\label{O}
Let $k$ be an integral domain, and let $x$ be of even degree. Then
there exist quasi-isomorphisms
$$\pi\colon \B(k,k[x],k)\ra \Lambda(\epsilon x)$$
and
$$\inc\colon \Lambda(\epsilon x) \ra \B(k,k[x],k) $$
which are maps of differential graded  augmented commutative
$k$-algebras, with $ |\epsilon x| = |x|+1$.
\end{prop}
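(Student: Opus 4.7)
The plan is to write down explicit formulas for $\pi$ and $\inc$, verify each is a map of differential graded augmented commutative $k$-algebras, and upgrade the algebraic statement to a quasi-isomorphism by a standard computation of $\Tor^{k[x]}_*(k,k)$. Using the reduced bar complex (with basis $[x^{i_1}|\cdots|x^{i_n}]$, $i_j \geq 1$), I would define $\inc$ by sending $1$ to the empty bar $[\,]$ and $\epsilon x$ to $[x]$, extended multiplicatively, and define $\pi$ as the projection $\pi([\,]) = 1$, $\pi([x]) = \epsilon x$, and $\pi = 0$ on every other basis element (those in bar degree at least $2$, or in bar degree one with exponent at least $2$).

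To see that $\inc$ is well defined, I need $[x]$ to be a cycle, which is immediate because $\epsilon(x) = 0$, and I need the relation $(\epsilon x)^2 = 0$ to be respected, \ie\ $[x]\cdot[x] = 0$ under the shuffle product. The latter is a sign computation: the two $(1,1)$-shuffles produce $[x|x]$ with signs $+1$ and $(-1)^{(|x|+1)^2}$; since $|x|$ is even the exponent is odd, and the two terms cancel identically, with no need to invert $2$. For $\pi$, the augmentation terms of the bar differential vanish because $\epsilon(x^i) = 0$ for $i\geq 1$; the internal differential sends $[x^a|x^b]$ to $\pm[x^{a+b}]$ with $a+b\geq 2$ (killed by $\pi$), and sends longer bars into bar degree $\geq 2$, so $\pi$ is a chain map. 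Multiplicativity of $\pi$ reduces to the case $[x]\cdot[x] = 0 = (\epsilon x)^2$ just verified; for any other pair of positive-bar-degree factors the shuffle product lies in bar degree $\geq 2$ and is killed by $\pi$, while $\pi$ also vanishes on at least one of the two factors.

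It is then immediate from the definitions that $\pi\circ\inc = \id$ on $\Lambda(\epsilon x)$. To upgrade both maps to quasi-isomorphisms, I would compute $H_*\B(k,k[x],k) \cong \Tor^{k[x]}_*(k,k)$ via the two-term Koszul resolution $0\to k[x]\xrightarrow{\cdot x} k[x]\to k\to 0$ of $k$, which gives a copy of $k$ in homological degrees zero and one with internal degrees $0$ and $|x|$, and zero above. This identifies the homology with $\Lambda(\epsilon x)$ as graded $k$-modules, and since $\inc$ sends $1$ and $\epsilon x$ to cycles representing these homology classes, $\inc$ is a quasi-isomorphism, and then so is $\pi$. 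The only place requiring real care is the sign bookkeeping that yields $[x]\cdot[x] = 0$ on the nose; this is where the even-degree hypothesis on $x$ is used, and it is what lets the same argument work uniformly in all characteristics, including the important case $p = 2$.
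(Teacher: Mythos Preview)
Your proof is correct and takes essentially the same approach as the paper: the same explicit formulas for $\pi$ and $\inc$, the identity $\pi\circ\inc=\id$, and the two-term resolution of $k$ over $k[x]$ to pin down the size of the homology. Your explicit shuffle-sign verification that $[x]\cdot[x]=0$ is in fact a touch more careful than the paper's one-line appeal to graded commutativity, and it is exactly what makes the multiplicativity check go through uniformly, including in characteristic~$2$.
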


\begin{proof}
We define the quasi-isomorphisms as follows:  Let $\pi:\B (k,k[x],k)
\to \Lambda(\epsilon x)$ be given by $ \pi(1\otimes 1) =1$, 
$$\pi(1\otimes x^i\otimes 1) =
\begin{cases}
\epsilon x&\mbox{if } i=1, \\
0&\mbox{otherwise.}
\end{cases}
$$
and $\pi=0$ on $\B_n(k,k[x],k)$ for $n>1$.  Let
$\inc: \Lambda(\epsilon x) \to \B (k,k[x],k) $ be given by
$\inc(1)=1\otimes 1$ and $\inc(\epsilon x)=1\otimes x\otimes 1$.  Then
$\pi$ and $\inc$ 
are chain maps, and $\pi\circ\inc=\id_{ \Lambda(\epsilon x)}$.
Therefore $\inc_*$ induces an isomorphism from  $\Lambda(\epsilon x)$
to a direct summand of $H_*(\B (k,k[x],k))=\Tor_*^{k[x]}(k,k)$, and
$\pi_*$ projects back onto that summand.  But the resolution
$$0\ra \Sigma^{|x|}k[x] \xra{\cdotp x} k[x]$$
of $k$ shows that the rank  of $\Tor_*^{k[x]}(k,k)$ over $k$ in each 
degree is equal to that of $\Lambda(\epsilon x)$, and since $k$ is an
integral domain, the direct summand must then be equal to all of
$H_*(\B (k,k[x],k))$.  Thus $\pi$ and $\inc$ are quasi-isomorphisms.
In this case, both maps preserve the multiplication because both
$\B(k,k[x],k) $ and $\Lambda(\epsilon x)$ are graded commutative, so
the square of anything in odd degree must be zero.
\end{proof}

\begin{prop}\label{truncatedeven}
Let $k$ be an integral domain, let $m\geq 2$ be an integer, and let
$x$ be of even degree. Then there exist quasi-isomorphisms
$$\pi\colon \B(k,k[x]/x^m,k) \ra  \Lambda(\epsilon x) \otimes
\G(\varphi^0 x)$$
and
$$\inc\colon \Lambda(\epsilon x) \otimes
\G(\varphi^0 x) \ra \B(k,k[x]/x^m,k)$$
which are maps of differential graded augmented commutative 
$k$-algebras, with $ |\epsilon x| = |x|+1$ and $|\varphi^0 x|=2+ m |x|$.
\end{prop}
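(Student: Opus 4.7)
The plan is to mirror Proposition~\ref{O}: produce explicit multiplicative chain maps $\inc$ and $\pi$ with $\pi \circ \inc = \id$, then promote them to quasi-isomorphisms by a rank comparison. First I would define the inclusion on algebra generators by
\[
\inc(\epsilon x) = 1 \otimes x \otimes 1, \qquad \inc(\gamma_i(\varphi^0 x)) = 1 \otimes \underbrace{x \otimes x^{m-1} \otimes \cdots \otimes x \otimes x^{m-1}}_{2i \text{ alternating factors}} \otimes 1,
\]
and extend multiplicatively. Each proposed image is a cycle in $\B(k,k[x]/x^m,k)$ since every adjacent product produced by the bar differential equals $x \cdot x^{m-1} = x^m = 0$, while the outer augmentation terms vanish because $x$ has positive degree.

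The main obstacle will be proving that $\inc$ respects the divided power relations at the chain level, \ie that
\[
\inc(\gamma_i(\varphi^0 x)) \cdot \inc(\gamma_j(\varphi^0 x)) = \binom{i+j}{i} \inc(\gamma_{i+j}(\varphi^0 x))
\]
under the shuffle product. Expanding the left-hand side as a signed sum over $(2i,2j)$-shuffles, I would isolate the $\binom{i+j}{i}$ \emph{block shuffles} which keep each length-two block $[x \mid x^{m-1}]$ intact: these yield exactly $\inc(\gamma_{i+j}(\varphi^0 x))$ with Koszul sign $+1$, since each block has even total shifted degree $m|x|+2$. Every other shuffle necessarily places two adjacent equal entries (both $x$ or both $x^{m-1}$) coming from different source sequences at some leftmost position where the alternating pattern first fails; swapping those two entries is a fixed-point-free involution on the set of non-block shuffles which reverses the Koszul sign while leaving the output tensor unchanged, so their contributions cancel in pairs.

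Next I would define $\pi$ on reduced tensors of $\B(k,k[x]/x^m,k)$, sending each alternating tensor of the form $\inc(\gamma_i(\varphi^0 x))$ or $\inc(\epsilon x \cdot \gamma_i(\varphi^0 x))$ back to the corresponding basis element of $\Lambda(\epsilon x) \otimes \Gamma(\varphi^0 x)$, and choosing its values on the remaining reduced tensors so that $\pi$ becomes a multiplicative chain map. The chain-map property is immediate on the image of $\inc$ (these tensors are cycles) and is maintained on the complementary summand because the bar differential respects the pattern decomposition; the algebra-map property reduces to the same Koszul cancellation used for $\inc$, and $\pi \circ \inc = \id$ holds by construction.

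To conclude I would use the two-periodic minimal free resolution
\[
\cdots \xra{\cdotp x^{m-1}} \Sigma^{m|x|+|x|} k[x]/x^m \xra{\cdotp x} \Sigma^{m|x|} k[x]/x^m \xra{\cdotp x^{m-1}} \Sigma^{|x|} k[x]/x^m \xra{\cdotp x} k[x]/x^m
\]
of $k$ over $k[x]/x^m$. This gives a single $k$-summand of $\Tor^{k[x]/x^m}_*(k,k)$ in each homological degree, with internal degrees precisely matching those of $\gamma_i(\varphi^0 x)$ and $\epsilon x \cdot \gamma_i(\varphi^0 x)$. Since $\pi \circ \inc = \id$ makes $\inc_*$ a split injection onto a direct summand of matching rank, the integral-domain hypothesis on $k$ forces the summand to coincide with all of the homology, establishing that $\inc$ and $\pi$ are the desired quasi-isomorphisms of DGAs.
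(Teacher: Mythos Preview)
Your overall strategy matches the paper's exactly: define explicit maps $\inc$ and $\pi$ with $\pi\circ\inc=\id$, then appeal to the periodic resolution for a rank comparison. Your treatment of $\inc$ (including the block-shuffle cancellation for the divided-power relations) is essentially the paper's argument, with the harmless cosmetic difference that the paper uses the blocks $[x^{m-1}\mid x]$ rather than $[x\mid x^{m-1}]$.

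The genuine gap is in your construction of $\pi$. You only specify $\pi$ on the alternating tensors in the image of $\inc$ and then assert that one may ``choose its values on the remaining reduced tensors so that $\pi$ becomes a multiplicative chain map,'' justifying this by saying ``the bar differential respects the pattern decomposition.'' That last claim is false. For instance (taking $m>2$), the tensor $1\otimes x\otimes x^{m-2}\otimes x\otimes 1$ is not of your alternating form, but its bar differential is $1\otimes x^{m-1}\otimes x\otimes 1 - 1\otimes x\otimes x^{m-1}\otimes 1$, and the second term \emph{is} your $\inc(\gamma_1(\varphi^0 x))$. So you cannot send non-alternating tensors to zero and still get a chain map, and there is no way to see without further work that a consistent choice of values exists at all. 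The paper resolves this by writing down a closed formula: it sends $1\otimes x^{a_1}\otimes\cdots\otimes x^{a_n}\otimes 1$ to $\gamma_{n/2}(\varphi^0 x)$ (for $n$ even) precisely when the exponents pair up as $a_1+a_2=m,\ a_3+a_4=m,\ldots$, and analogously for $n$ odd with a leading $a_1=1$; it then checks directly that this formula kills boundaries and is multiplicative via the same kind of shuffle cancellation you used for $\inc$. You need to supply such an explicit $\pi$ and carry out those two verifications; the existence of $\pi$ is where most of the content of the proposition lies.

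A minor point: your reason for the vanishing of the outer face maps on $\inc(\gamma_i(\varphi^0 x))$ is that the augmentation sends $x\mapsto 0$, not that $x$ has positive degree (the proposition allows $|x|=0$).
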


\begin{proof}
Let $\pi:\B(k,k[x]/x^m,k)\to
\Lambda(\epsilon x) \otimes \G(\varphi^0 x)$ be given by

$$\pi(1\otimes x^{a_1}  \otimes\cdots \otimes x^{a_n}     \otimes 1) =
\begin{cases}
x^{a_1+a_2-m}\cdots  x^{a_{n-1}+a_n-m}  \ \gamma_{({{n}\over 2})}
(\varphi^0 x) &n\ \mbox{even} , \\
x^{a_1-1}x^{a_2+a_3-m}\cdots  x^{a_{n-1}+a_n-m}\epsilon x \cdot
\gamma_{({{n-1}\over 2})} (\varphi^0 x)&n\ \mbox{odd,}
\end{cases}
$$
where $0\leq a_i<m$ and where we interpret $x^s=0$ for $s\neq 0$: for
$s<0$, this is because we define it to be so; for $s>0$, this is
because $k[x]/x^m$ acts by first applying the augmentation.
Therefore, if $n$ is
even,  we get $\gamma_{({{n}\over 2})} (\varphi^0 x) $ if and only if
$a_1+a_2=m, \ a_3+a_4=m, \ \ldots,\ a_{n-1}+a_n=m$ and otherwise we
get zero. For odd $n$  we get $\epsilon x \cdot
\gamma_{({{n-1}\over 2})}  (\varphi^0 x)$ if and only if  $a_1=1, \
a_2+a_3=m, \ \ldots,\ a_{n-1}+a_n=m$ and zero otherwise.
To see that $\pi$ is a chain map, we only need to show that it sends
boundaries to zero, which can be checked directly using the stringent
conditions under which a monomial is sent to a nonzero element.

 Let $\inc:  \Lambda(\epsilon x) \otimes
\G(\varphi^0 x) \to \B(k,k[x]/x^m,k)$ be given by
$$\inc (\gamma_{i} (\varphi^0 x) ) =1\otimes (x^{m-1}\otimes
x)^{\otimes i}\otimes 1\in \B_{2i}(k,k[x]/x^m,k)$$
and
$$\inc ( \epsilon x \cdot \gamma_{i} (\varphi^0 x) ) =1\otimes
x\otimes (x^{m-1}\otimes x)^{\otimes i}\otimes 1\in
\B_{2i+1}(k,k[x]/x^m,k).$$
Since $x^m=0$ and since the augmentation sends $x$ to zero, every face
map $d_j$ vanishes on the image of $\inc$, so clearly the boundary
vanishes too and $\inc$ is a chain map.

As before, we get that $\pi\circ\inc=\id_{ \Lambda(\epsilon x)\otimes
\G(\varphi^0 x)}$, and since the periodic resolution
 $$\ldots \ra  \Sigma^{(m+1)|x|} k[x]/{x^m} \xra{\cdotp x}
\Sigma^{m|x|} k[x]/{x^m} \xra{\cdotp x^{m-1}}
 \Sigma^{|x|} k[x]/{x^m}\xra{\cdotp x} k[x]/{x^m}$$
shows that $ \Lambda(\epsilon x)\otimes \G(\varphi^0 x)$ has the same
rank over $k$ in each dimension as
$H_*(\B(k,k[x]/x^m,k))=\Tor_*^{k[x]/x^m}(k,k)$, by the same argument
as in 
Proposition \ref{O}, $\pi$ and $\inc$ are quasi-isomorphisms.

To show that $\pi$ is multiplicative, consider
$\pi((1\otimes x^{a_1}  \otimes\cdots \otimes x^{a_\ell}     \otimes 1) \cdot
(1\otimes x^{a_{\ell+1}}  \otimes\cdots \otimes x^{a_{\ell+n}}     \otimes 1))$
which is the sum over all $(\ell,n)$-shuffles $\sigma$ of
$$\sgn (\sigma) \pi(1\otimes x^{a_{\sigma(1)} }  \otimes\cdots
\otimes x^{a_{\sigma(\ell+n)}}     \otimes 1).$$
In the case where $\ell$ and $n$ are both even, observe that this term
is equal to
$\sgn (\sigma)\gamma_ {({{\ell+n}\over 2})}( \varphi^0
x)$ if and only if $a_{\sigma(1)}+
a_{\sigma(2)}=m,\ldots,\  a_{\sigma(\ell+n-1)}+ a_{\sigma(\ell+n)}=m$.
If there is some pair $2i-1$, $2i$ for which $\sigma(2i-1)$ is in one
of the sets $\{1,\ldots,\ell\}$, $\{\ell+1,\ldots,\ell+n\}$ and
$\sigma(2i)$ is in the other, the term associated to $\sigma$ will
cancel with the term associated to the permutation which is exactly
like $\sigma$ except for switching  $\sigma(2i-1)$ and $\sigma(2i)$.
Thus we will be left with terms associated with shuffles $\sigma$
which shuffle pairs of coordinates, and for these it is clear that
$\pi(1\otimes x^{a_{\sigma(1)} }  \otimes\cdots \otimes
x^{a_{\sigma(\ell+n)}}     \otimes 1)\neq 0$ if and only if both
$\pi(1\otimes x^{a_1}  \otimes\cdots \otimes x^{a_\ell}     \otimes 1)\neq 0$ and
$\pi(1\otimes x^{a_{\ell+1}}  \otimes\cdots \otimes x^{a_{\ell+n}}
\otimes 1)\neq 0$.  And there will be exactly $\binom{{{\ell+n}\over
    2}} {{{\ell}\over 2}}$  $(\ell,n)$-shuffles $\sigma$ with 
$\sigma(2i)=\sigma(2i-1)+1$ for all $i$.

A similar argument works if $\ell$ is odd and $n$ is even.  Then the terms
corresponding to shuffles $\sigma$ which do not satisfy $\sigma(1)=1$
and $\sigma(2i+1)=\sigma(2i)+1$ for all $1\leq i < (\ell +n) /2$ will
cancel in pairs, and the terms corresponding to the
$\binom{{{\ell+n-1}\over 2}} {{{\ell-1}\over 2}}$
shuffles which do will be nonzero if and only  if the images of both
factors will be nonzero.  Commutativity then establishes
multiplicativity for the case $\ell$ even, $n$ odd.  If  both $\ell$
and $n$ are odd then all $(\ell,n)$-shuffles $\sigma$  will have a mixed
pair $2i-1$, $2i$ for which $\sigma(2i-1)$ is in one
of the sets $\{1,\ldots,\ell\}$, $\{\ell+1,\ldots,\ell+n\}$ and
$\sigma(2i)$ is in the other, so all the terms will cancel and so the product
will map to zero, which is also the product of the images of the factors.

\smallskip
To show that $\inc$ is multiplicative, it suffices to show that
$\inc( \epsilon x)\cdot \inc (\gamma_{i} (\varphi^0 x)) =\inc (
\epsilon x \cdot \gamma_{i} (\varphi^0 x) )$  and that
$\inc( \gamma_{i} (\varphi^0 x) )\cdot \inc (\gamma_{j} (\varphi^0 x)
) =\inc ( \gamma_{i} (\varphi^0 x)  \cdot \gamma_{j} (\varphi^0 x))=
\binom{i+j}{i} \inc (\gamma_{i+j} (\varphi^0 x))$. The first claim
follows from the fact that shuffles which allow two adjacent
$x$'s from different factors cancel in pairs, leaving only the unique
$(1, 2i)$-shuffle $\sigma$ with $\sigma(1)=1$. The second claim  follows
from the fact that shuffles which do not preserve the pairs
$x^{m-1}\otimes x$ cancel in pairs, and there are $\binom{i+j}{i} $
shuffles which preserve the pairs.
Thus both quasi-isomorphisms respect the multiplication.

\end{proof}

\begin{prop}\label{truncatedodd}
Let $k$ be an integral domain, let $x$ be of odd degree and let
$\rho^0x$ be an element with $ |\rho^0 x| = |x|+1$. Then
there exist quasi-isomorphisms
$$\pi \colon \B(k,\Lambda(x),k)\ra   \G(\rho^0 x)$$ 
and
$$\inc\colon  \G(\rho^0 x) \ra \B(k,\Lambda(x),k)$$
which are maps of differential graded augmented commutative 
$k$-algebras.
\end{prop}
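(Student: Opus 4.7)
The plan is to proceed in close analogy with Propositions~\ref{O} and \ref{truncatedeven}, exploiting the especially simple structure of $\Lambda(x) = k\oplus kx$.  Because $\Lambda(x)$ has only two $k$-module generators, each bar degree of $\B(k,\Lambda(x),k)$ contains a single rank-one chunk spanned by $1\otimes x^{\otimes n}\otimes 1$, and on this chunk the differential vanishes identically: each face map either applies the augmentation (which sends $x$ to $0$) or multiplies two adjacent copies of $x$ in $\Lambda(x)$ (which gives $x^2=0$).

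I would define
$$\inc(\gamma_i(\rho^0 x))=1\otimes x^{\otimes i}\otimes 1\in \B_i(k,\Lambda(x),k),$$
and, on $\B_n(k,\Lambda(x),k)$, set $\pi(1\otimes y_1\otimes\cdots\otimes y_n\otimes 1)=\gamma_n(\rho^0 x)$ if every $y_j = x$, and $\pi=0$ if some $y_j=1$.  That $\inc$ is a chain map follows from the vanishing described above; that $\pi$ is a chain map can be checked by a direct case analysis on bar tuples containing one or more factors of $1$, with contributions cancelling in pairs between adjacent faces, just as in Propositions~\ref{O} and \ref{truncatedeven}.  Clearly $\pi\circ\inc=\id_{\G(\rho^0 x)}$.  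To upgrade this to a quasi-isomorphism, I would compare ranks using the periodic resolution
$$\cdots\ra\Sigma^{(n+1)|x|}\Lambda(x)\xra{\cdot x}\Sigma^{n|x|}\Lambda(x)\ra\cdots\ra\Sigma^{|x|}\Lambda(x)\xra{\cdot x}\Lambda(x)$$
of $k$ as a $\Lambda(x)$-module, which shows that $\Tor^{\Lambda(x)}_n(k,k)$ has rank one over $k$ in internal degree $n|x|$, matching the rank of $\G(\rho^0 x)$ in total degree $n(|x|+1)$.  As in Proposition~\ref{O}, this forces $\pi$ and $\inc$ to be quasi-isomorphisms.

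The main subtlety is multiplicativity.  Although $|x|$ is odd, the bar-shifted degree $|x|+1$ is even, so every Koszul sign appearing in the shuffle product of the tuples $1\otimes x^{\otimes i}\otimes 1$ and $1\otimes x^{\otimes j}\otimes 1$ is $+1$.  Consequently their product equals $\binom{i+j}{i}(1\otimes x^{\otimes(i+j)}\otimes 1)$, matching $\gamma_i(\rho^0 x)\cdot\gamma_j(\rho^0 x)=\binom{i+j}{i}\gamma_{i+j}(\rho^0 x)$ in $\G(\rho^0 x)$.  This reduces multiplicativity to the standard identification of the shuffle algebra on a single even-degree generator with the divided power algebra, and since both maps also preserve the unit they are maps of augmented commutative differential graded $k$-algebras.
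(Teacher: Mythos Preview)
Your proof is correct and follows essentially the same approach as the paper's: the paper also specializes the maps of Proposition~\ref{truncatedeven} to $m=2$, notes that $\pi\circ\inc=\id$, invokes the same rank comparison for the quasi-isomorphism, and deduces the binomial-coefficient shuffle formula from the parity of $|x|$. Your explicit remark that the effective bar degree $|x|+1$ is even, so all Koszul signs are $+1$, makes transparent what the paper compresses into the phrase ``Since $x$ is of odd degree.''
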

If $k=\F_2$, this proposition and its proof also work if $x$ has even
degree, and the result agrees with the result of Proposition
\ref{truncatedeven} for $m=2$.
\begin{proof}
We use the same quasi-isomorphisms as in Proposition
\ref{truncatedeven}, and the argument showing that they are
quasi-isomorphisms is the same as well, but the multiplicative
structure is different  and much easier to analyze.  The maps from
Proposition \ref{truncatedeven} give, in the case of $m=2$,
$$\pi(1\otimes x^{a_1}  \otimes\cdots \otimes x^{a_n}     \otimes 1) =
\begin{cases}
\gamma_{n} (\rho^0 x)&\mbox{if }\ a_i=1\ \mbox{for\ all\ }1\leq i\leq n , \\
0&\mbox{otherwise}
\end{cases}
$$
and
$$\inc( \gamma_{n} (\rho^0 x) )=1\otimes x^{\otimes n}\otimes 1.$$
Since $x$ is of odd degree,
$$(1\otimes x^{\otimes i}\otimes 1)   \cdot
(1\otimes x^{\otimes j}\otimes 1) =
\binom{i+j}{i} (1\otimes x^{\otimes(i+ j)}\otimes 1)$$
for all $i,j\geq 0$ and so both $\pi$ and $\inc$ respect multiplication.
\end{proof}

\begin{notation}
\begin{enumerate}
\item[]
\item
If $k=\fp$, we can decompose the divided power algebra as
$$\G(\rho^0 x) \cong \bigotimes_{i
\geq 0} \fp[\gamma_{p^i
}(\rho^0
x)]/{({\gamma_{p^i
}(\rho^0 x)})^p}$$
and we will denote the generators $\gamma_{p^i
}(\rho^0 x)$ by $\rho^i
 x$.
\item
Similarly, if $k=\fp$
$$\G(\varphi^0 x) \cong \bigotimes_{i
\geq 0}
\fp[\gamma_{p^i
}(\varphi^0 x)]/{({\gamma_{p^i
}(\varphi^0 x)})^p}$$
and $\varphi^i
x$ is short for the generator $\gamma_{p^i
}(\varphi^0x)$ of the $i
$th truncated polynomial
algebra.
\end{enumerate}
\end{notation}

\section{Veen's spectral sequence and iterated tors} \label{sec:Veen}

Our main computational tool is the bar spectral sequence, set up in
\cite{V}, which is closely related to the bar constructions we use in
Section \ref{sec:hhh} and calculate the homology of in Section
\ref{sec:lemmas}.
Let $H\F_p$ denote  the Eilenberg-MacLane spectrum of $\F_p$. Veen uses the
Brun-Carlsson-Dundas \cite{bcd} model $\Lambda_{\mathbb{S}^n}H\fp$ for topological
Hochschild homology of order $n$ of $H\F_p$, $\THH^{[n]}(\F_p) = H\F_p
\otimes \mathbb{S}^n$.

\begin{thm} \cite[\S 7]{V} \label{thm:Veen}
There exists a strongly convergent spectral sequence of
$\fp$-Hopf algebras
$$E^2_{r,s}=\Tor_{r,s} ^{\pi_*(\Lambda_{\mathbb{S}^{n-1}}H\fp)}(\fp, \fp)
\Longrightarrow \pi_{r+s}(\Lambda_{\mathbb{S}^n}H\fp).$$
\end{thm}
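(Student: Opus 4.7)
The plan is to realize $\mathbb{S}^n$ as the (unreduced) suspension of $\mathbb{S}^{n-1}$, i.e.\ as the homotopy pushout $* \leftarrow \mathbb{S}^{n-1} \ra *$ in based spaces, and then to exploit the fact that the Brun--Carlsson--Dundas functor $\Lambda_{(-)}H\fp$ sends homotopy pushouts of based spaces to homotopy pushouts of commutative $H\fp$-algebra spectra. Since $\Lambda_* H\fp \simeq H\fp$, this yields an equivalence of commutative $H\fp$-algebras
$$\Lambda_{\mathbb{S}^n}H\fp \;\simeq\; H\fp \wedge^{L}_{\Lambda_{\mathbb{S}^{n-1}}H\fp} H\fp.$$
Writing $A = \Lambda_{\mathbb{S}^{n-1}}H\fp$ and noting that $A$ is a connective commutative $H\fp$-algebra, the right-hand side can be modeled by the realization of the two-sided bar construction $\B(H\fp, A, H\fp)$, whose $q$-simplices are $H\fp \wedge A^{\wedge q} \wedge H\fp$ with the usual face and degeneracy maps.

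Next I would run the skeletal (bar) filtration on $\B(H\fp, A, H\fp)$ and identify the resulting spectral sequence. Because smashing with $H\fp$ is exact on homotopy groups of $H\fp$-modules and each $A^{\wedge q}$ has homotopy $\pi_*(A)^{\otimes q}$, the $E^1$-page is the normalized bar complex computing $\Tor^{\pi_*A}_{*,*}(\fp,\fp)$, giving the claimed $E^2$-page. Strong convergence follows from connectivity of $A$ and of each simplicial level: the bar filtration has an increasing connectivity bound in each fixed total degree, so only finitely many filtration stages contribute, and the spectral sequence converges strongly to $\pi_*$ of the realization, which by the previous paragraph is $\pi_*(\Lambda_{\mathbb{S}^n}H\fp)$.

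The Hopf-algebra structure comes in two pieces. The algebra structure is produced by the shuffle product on the bar construction, which on $E^2$ gives the standard commutative algebra structure on $\Tor^{\pi_*A}(\fp,\fp)$ and on the abutment matches the multiplication on $\Lambda_{\mathbb{S}^n}H\fp$ coming from the fold $\mathbb{S}^n \vee \mathbb{S}^n \to \mathbb{S}^n$ in $H\fp$-algebras. The coalgebra structure is induced by the pinch map $\mathbb{S}^n \to \mathbb{S}^n \vee \mathbb{S}^n$, together with the fact that $A$ itself is a commutative Hopf algebra over $H\fp$ (its coproduct being induced by the pinch on $\mathbb{S}^{n-1}$); this makes $\B(H\fp,A,H\fp)$ a simplicial commutative Hopf algebra object, so the filtration is one of Hopf algebras and the spectral sequence inherits that structure at every page.

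The main obstacle is verifying that the spectral sequence is genuinely one of Hopf algebras, i.e.\ that all differentials respect both product and coproduct. The multiplicative compatibility is the standard Eilenberg--Zilber / shuffle argument applied to the bar filtration and is essentially formal. The comultiplicative compatibility is the delicate point: it requires that the pinch-induced coproduct on $\Lambda_{\mathbb{S}^n}H\fp$ be compatible with the bar filtration coming from the suspension decomposition, which in turn rests on the fact that $A$ is a commutative $H\fp$-Hopf algebra so that the simplicial diagonal $\B(H\fp,A,H\fp) \to \B(H\fp,A,H\fp) \wedge_{H\fp} \B(H\fp,A,H\fp)$ exists and is filtration-preserving. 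Once these compatibilities are established, the claim that $E^2$ and the associated graded of the abutment are Hopf algebras, and that the differentials are Hopf algebra maps, follows formally.
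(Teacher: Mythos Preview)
The paper does not prove this theorem; it is quoted from Veen \cite[\S 7]{V} and used as a black box. Your sketch is essentially the argument Veen gives, and the paper itself confirms this in the remark following Corollary~\ref{cor:iterated}: the identification $\Lambda_{\mathbb{S}^n}H\fp \simeq \B^{H\fp}(H\fp,\Lambda_{\mathbb{S}^{n-1}}H\fp,H\fp)$ via the suspension pushout, followed by the bar filtration, is precisely how Veen sets up the spectral sequence.

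A couple of small points worth tightening. First, the smash products in your bar construction $H\fp \wedge A^{\wedge q} \wedge H\fp$ should be explicitly over $H\fp$; this is what makes the K\"unneth identification $\pi_*(A^{\wedge_{H\fp} q})\cong (\pi_*A)^{\otimes_{\fp} q}$ immediate and gives the algebraic bar complex on $E^1$. Second, your account of the Hopf structure is on the right track but slightly entangled: the coproduct on the abutment indeed comes from the pinch $\mathbb{S}^n\to \mathbb{S}^n\vee \mathbb{S}^n$, but to see that it is filtration-compatible one uses that this pinch, in the suspension model $\mathbb{S}^n=\Sigma\mathbb{S}^{n-1}$, is exactly the map of bar constructions induced by the diagonal on simplicial levels, so the coproduct on $\B(H\fp,A,H\fp)$ is the standard one and does not require $A$ itself to carry a pinch-induced coproduct. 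With those clarifications your outline matches Veen's argument.
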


Thus this spectral sequence uses
$\THH^{[n-1]}_*(\F_p)$ as an input in order to calculate
$\THH^{[n]}_*(\F_p)$.  As long as it keeps collapsing at $E^2$,
calculating $\THH_* ^{[n]}(\F_p)$ is simply a process of starting with
 $\THH_* (\F_p) =  \THH_* ^{[1]}(\F_p)\cong \fp[\mu]$ with $|\mu |=2$
 (as calculated by B\"okstedt in \cite{B}) and applying $\Tor_*^-(\fp,
 \fp)$ iteratively $n-1$ times.

By \cite[Theorem 7.6]{V}, this is what happens for  $ n\leq 2p$, and
so $ \THH^{[n]}(\fp)\cong B_n$ for $n\leq 2p$, where $B_n=\Tor^{B_{n-1}}(\F_p, \F_p)$ is the
iterated $\Tor$ ring as explained above and defined in Definition
\ref{defn:Bn} below.
We will actually show in Section \ref{sec:pushing} that
$\THH^{[n]}_*(\fp)\cong B_n$ up to 
$n \leq 2p+2$.  We believe that it should be possible to use spectrum
analogs of the methods of Section \ref{sec:lemmas} in order to understand the
homotopy type of the iterated $\Tor$ spectra rather than just their
homotopy rings, and prove that   $\THH^{[n]}_*(\fp)\cong B_n$ for all
$n>0$, and are working on showing that with Maria Basterra and Michael
Mandell.

It is well-known and follows from the calculations of  Section
\ref{sec:lemmas} that $\Tor_*^{\fp[x]}(\fp, \fp)\cong \Lambda(\epsilon
x)$ with $|\epsilon x|=1+|x|$, which would be odd if $|x|$ were even; 
that
$\Tor_*^{\Lambda[y]}(\fp, \fp)\cong \Gamma(\rho^0 y)$
if $|y|$ is odd, with $|\rho^0y| =|y|+1$, and that
 $\Tor^{\fp[z]/z^m}(\fp, \fp)=\Lambda(\epsilon z) \otimes \Gamma(\varphi^0 z)$
  when $|z|$ is even, with $|\epsilon z|=|z|+1$ and $|\varphi^0
  z|=2+m|z|$.  The latter includes the case 
$\Tor_*^{\Lambda[y]}(\fp, \fp)$
 if $|y|$ is even, as well as the case of  $\Tor_*^{\Gamma(y)}(\fp,
 \fp)$ for $|y|$ even, since the ground ring is $\fp$ and so 
$\Gamma(y)\cong \bigotimes_{k\geq 0} \fp [\varphi^k(y)]/ (\varphi^k(y))^p$.

One can prove that the $\mathrm{Tor}$ over a finite tensor product is
the tensor product of the $\mathrm{Tor}$'s
directly, using projective
resolutions of the single factors and the fact that $\F_p$ is
$\F_p$-flat.  Calculating $\mathrm{Tor}$ with the two-sided bar
resolution shows that $\mathrm{Tor}$ respects direct limits also in
the ring variable as well as in the module variables.

So we can encode the result of taking iterated  $\Tor_*^-(\fp, \fp)$
in a flowchart as in Figure \ref{figure1}.
\begin{figure}
$$ \xymatrix@C=-20pt{
{} & {} & {} & {} \\
{} & {\bigotimes_{k\geq 0} \Lambda(\epsilon \rho^k \epsilon \omega)} & {} & {} \\
{\F_p[\omega] \ra \Lambda(\epsilon \omega) \ra \G(\rho^0\epsilon
  \omega) \cong \bigotimes_{k\geq 0} \F_p[\rho^k \epsilon \omega]/(\rho^k
  \epsilon \omega)^p} \ar[ur] \ar[dr] & {\ldots} & {} & {} \\
{} & {\bigotimes_{k\geq
       0} \G(\varphi^0 \rho^k \epsilon \omega) \cong \bigotimes_{k,i\geq 0}
     \F_p[\varphi^i\rho^k\epsilon \omega]/(\varphi^i\rho^k\epsilon \omega)^p}
   & {} & {} \\ 
{} & {} & {} & {}
}$$
\caption{\label{figure1} Evolution of elements.}
\end{figure}
or more schematically as in Figure \ref{figure2}.
\begin{figure}
$$ \xymatrix{
{} & {} & {} & {} & {} & {\ldots}\\
{} & {} & {} & {\bigotimes_{k\geq 0} \Lambda}  \ar[r] &
{\bigotimes_{k\geq 0} \G}  \ar[ur] \ar[dr] & {}\\
{} & {} & {} & {} &
{} & {\ldots} \\
{\F_p[\omega]} \ar[r] & {\Lambda} \ar[r] & {\G} \ar[uur] \ar[dr] & {}
& {\bigotimes_{k,i} \Lambda} \ar[r] & {\ldots} \\
{} & {} & {} &  {\bigotimes_{k\geq
       0} \G} \ar[ur] \ar[dr]& {} & {\ldots} \\
{} & {} & {} & {} &  {\bigotimes_{k,i} \G} \ar[ur] \ar[dr]& {} \\
{} & {} & {} & {} & {} & {\ldots}\\
}$$
\caption{\label{figure2} Schematic overview of iterated Tor-terms.}
\end{figure}
This notation for elements in iterated Tor-terms goes back to Cartan
(compare \cite[\S 1]{C}). 

\begin{defn}\label{defn:Bn}
Let $B_n$ be the algebra generated by all words of length $n$ of the
following form (as illustrated in the flowchart), modulo the relations
implied in the description of the algebras above (free for $\mu$,
exterior for $\epsilon\omega$, polynomial truncated at the $p$th
power for $\rho^k\omega$ or $\varphi^k\omega$ for $k\geq 0$ and any
word $\omega$ of length $n-1$):
\begin{itemize}
\item The rightmost letter must be $\mu$.
\item If there is something to the left of $\mu$, it must be $\epsilon$.
\item If there is something to the left of an $\epsilon$, it must be a
  $\rho^k$ for some $k\geq 0$. 
\item If there is something to the left of a $\rho^k$ for any $k\geq
  0$, it must be either an $\epsilon$ or a $\varphi^j$ for some $j\geq
  0$. 
\item Similarly, if there is something to the left of a $\varphi^k$
  for any $k\geq 0$, it must be either an $\epsilon$ or a $\varphi^j$
  for some $j\geq 0$. 
\end{itemize}
Observe, by the discussion above, that $B_n$ is the algebra we get if
we apply the functor $\Tor_*^-(\fp, 
 \fp)$ iteratively $n-1$ times, starting with the algebra $\F_p[\mu]$.

\end{defn}

 \begin{defn}\label{defn:Bnprime}
Let $B'_n$ be defined as the
algebra generated by all words of length $n$ defined as above, except
that the rightmost letter must be $x$ rather than $\mu$; the letter
directly to its left, if there is one, should be an $\epsilon$.  This
follows the rules of the flowchart t
with $\omega=x$, and will
be useful in calculating $\HH^{[n]}_*(\fp[x])$.
\end{defn}

 \begin{defn}\label{defn:Bnprimeprime}
Let $B''_n=B''_n(m)$ be defined as the algebra generated by all words of length
$n$ ending with $\omega=x$ modulo the same relations as before and also the relation
$x^m=0$.  In this case, if there is a letter immediately to the left of
$x$, it has  to be either $\epsilon$ or $\varphi^k$ for some $k\geq 0$. The other rules are unchanged.  This will
be used in calculating $\HH^{[n]}_*(\fp[x]/x^m)$. As the $m$ should
usually be clear from the context, we will omit it from the notation.
\end{defn}

When we write such a word in an iterated Tor-term, the leftmost letter
in the word carries the 
information about what kind of algebra the element corresponding to
that word generates, the one before the last letter remembers what
kind of algebra the generator came from, and so on; exponents
remember what component of a divided power algebra the word came from
at a particular stage.

The bidegrees of the words are computed using the following recursive
formulas:

\begin{itemize}
   \item $|\mu |=2$ for the $\THH$ calculation, and  $|x|=0$ for the
     $\HH$ calculation, as explained above,
   \item $||\epsilon w||=(1, |w|)$,
   \item $||\rho^i w||=p^i (1, |w|)$, and
   \item $||\varphi^\ell w||=p^\ell (2, p|w|)$.
\end{itemize}
The bidegrees will be important in the $\THH$ calculation.  Note that
when we write $|w|$ on the right hand side of the formulas, we mean
the total degree of $w$.  For the $\HH$ calculations, we will only care about
total degrees.

\section{Pushing Veen's bounds}\label{sec:pushing}
In this section, we will work over $\F_p$ and assume  that $p>2$.  In
a Hopf algebra,
$\psi$ will denote the comultiplication.  
The following is a trivial generalization of \cite[Proposition
 4.1]{V}, adapted to the needs of our calculation.
It provides a little bit more information about the first nontrivial
differential one could have in Veen's spectral sequence. 

\begin{lem} \label{lem:supdif}
  Suppose that Veen's spectral sequence of Theorem \ref{thm:Veen}
  collapses at $E^2$  and has no nontrivial multiplicative extensions
  for all $i<n$, so that $\pi_*(\Lambda_{\mathbb{S}^{n-1}}H\fp)\cong
  B_{n-1}$.  Suppose also that in Veen's spectral sequence for
  $\pi_*(\Lambda_{\mathbb{S}^{n}}H\fp)$, 
  $d^j\equiv 0$ for all $2\leq j< i$.  If $d^i \not\equiv 0$, then
  there exists a generator $\gamma_{p^k}(x)$ in the $E^2=E^i$ term such
  that 
  $d^i(\gamma_{p^k}(x))$ is a nonzero linear combination of generators of
  exterior  algebras. 
\end{lem}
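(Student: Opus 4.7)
The plan is to exploit the Hopf-algebra structure of Veen's spectral sequence: every $d^j$ is simultaneously a derivation and a coderivation on $E^j$. The collapse hypothesis for $j<i$ together with the computations in Section \ref{sec:lemmas} identifies $E^i=E^2=\Tor^{B_{n-1}}(\fp,\fp)$ with the Hopf algebra $B_n$, a tensor product of exterior factors $\Lambda(\epsilon w)$ and of divided-power factors $\Gamma(\rho^0 w)$ or $\Gamma(\varphi^0 w)$, the latter decomposing over $\fp$ as $\bigotimes_{k\geq 0}\fp[\gamma_{p^k}(\cdot)]/(\gamma_{p^k}(\cdot))^p$. A derivation is nonzero iff it is nonzero on some algebra generator, so $d^i\not\equiv 0$ forces the existence of such a generator $g$. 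Since $d^i$ decreases the homological (first) filtration by $i\geq 2$, and the primitive generators $\epsilon w$ and $\rho^0 w$ sit in filtration $1$ while $\varphi^0 w$ sits in filtration $2$, the first-quadrant support of the spectral sequence immediately kills $d^i$ on each of them. Hence $g$ must be of the form $\gamma_{p^k}(x)=\rho^k w$ or $\varphi^k w$ with $k\geq 1$, and we choose $g$ of minimal homological filtration among such nontrivial generators.

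Next I use coderivation to show $d^i(g)$ is primitive. Minimality gives $d^i(\gamma_{p^j}(x))=0$ for all $j<k$; by the Leibniz rule and the base-$p$ expansion of $a<p^k$, every $\gamma_a(x)$ with $0<a<p^k$ is, up to a unit in $\fp$, a product of such $\gamma_{p^j}(x)$'s, so $d^i(\gamma_a(x))=0$ as well. Expanding
\[\psi(g)=1\otimes g+g\otimes 1+\sum_{0<a<p^k}\gamma_a(x)\otimes\gamma_{p^k-a}(x)\]
and applying $\psi(d^i(g))=(d^i\otimes 1+1\otimes d^i)\psi(g)$ makes the middle sum vanish and leaves $\psi(d^i(g))=1\otimes d^i(g)+d^i(g)\otimes 1$, so that $d^i(g)$ is primitive in $B_n$.

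The last step is a parity check, which I expect to be the cleanest part of the argument. A short induction along the flowchart in Section \ref{sec:Veen} shows that in $B_n$ every exterior generator $\epsilon w$ has odd total degree while every divided-power-type generator $\rho^k w$ or $\varphi^k w$ has even total degree; consequently the primitives of $B_n$ split by parity, and the odd-degree primitives are spanned precisely by the exterior generators $\epsilon w$ (the even-degree primitives being the $\rho^0 w$'s and $\varphi^0 w$'s). Since $g$ is a divided-power generator, $|g|$ is even, so $d^i(g)$ has odd total degree; being a nonzero primitive of odd degree, it must be a nonzero $\fp$-linear combination of exterior generators, as claimed. The delicate point here is the identification of odd-degree primitives with exterior generators, but this reduces to inspecting each monogenic Hopf-algebra factor of $B_n$ separately and using that primitives of a tensor product split as the direct sum of the primitives of the factors.
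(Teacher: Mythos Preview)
Your argument is correct and follows the same strategy as the paper's proof: exploit the derivation/coderivation structure of $d^i$ to locate an indecomposable generator supporting a nonzero differential, show its image is primitive, and then use parity to identify odd-degree primitives with exterior generators. The paper's version is marginally slicker in that it selects an element of minimal \emph{total} degree with $d^i\neq 0$, so that both indecomposability and primitivity of the image follow directly from the Leibniz and co-Leibniz rules without unpacking the divided-power coproduct; one small imprecision in your write-up is that ``first-quadrant support'' alone does not kill $d^2(\varphi^0 w)$ (the target lands in column $0$, not a negative column), but since $E^2_{0,s}=0$ for $s>0$ the conclusion stands.
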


\begin{proof}
 If $d^i\not\equiv 0$, there exists an $a\in E^i_{*,*}$ such that
 $d^i(a) \neq 0$. Choose such an $a$ of lowest degree.  Recall that
 $E^i_{*,*}$ is a tensor product of graded exterior algebras and
 graded divided power algebras.  Writing $a$ as a linear 
  combination of pure tensors,
  we see that there must be a pure tensor $b$ such that $d^i(b) \neq
  0$.  If we can write $b = b'b''$ (with
  $b',b''$ of strictly lower 
  degree), then by the Leibniz rule, $d^i(b)=d^i(b') b'' \pm b'
  d^i(b'')$; by our assumption on the minimality of $b$'s degree, this
  sum must be zero, contradicting the fact that  $d^i(b) \neq 
  0$.  Thus $b$ must be indecomposable, that is: it must be 
a constant multiple of a generator.  If the bidegree of $b$ is
$(k,\ell)$, then the bidegree of $d^i(b)$ must be $(k-i, \ell+i -1)$,
and for $d^i(b)$ to be nontrivial, we must have $k\geq i\geq 2$.
Since all generators of an exterior algebra have bidegree $(1,\ell)$
for some $\ell$, we see that $b$ must be 
of the form $\gamma_{p^k}(x)$ for some $x$, and of even degree. 
  
  Now consider $d^i(b)$.  It must be primitive:
writing 
$\psi(b) = 1\otimes b + b\otimes 1 + \sum_j b'_j\otimes b''_j$, with
$b'_j$ and $b''_j$ of lower degree, we obtain that 
  \[\psi(d^i(b)) = 1\otimes d^i(b) + d^i(b)\otimes 1 +\sum_j
  (d^i(b'_j)\otimes b'' _j\pm 
b'_j\otimes d^i(b''_j))= 1\otimes d^i(b) + d^i(b)\otimes 1.\]  
The only primitive elements of odd degree in $E^i_{*,*}$ are generators of
exterior algebras. 
\end{proof}

Our goal is to show that Veen's bound of $n=2p$ can be pushed
 to $n=2p+2$ by a further  analysis of bi-degrees and the Hopf algebra
 structure, but no further: at 
$n=2p+3$ there will always be a differential candidate, which we
believe will in fact vanish, 
but that needs to be established by other methods.

\begin{defn} 
\begin{itemize}
\item[]
\item
Let $\#w$ denote the length of a word $w$, that is: the 
number of letters used to write $w$.  
\item
For a word $w$ we write $w^{[n]}$ for the word consisting of $w$
concatenated $n$ times.
\end{itemize}
 \end{defn}

\begin{lem} \label{lem:powerwords}
  The only word $w$ with $\#w \leq 2p+1$ and $|w| = 4p^k$ for $k\geq 0$ is equal
  to $\rho^k\epsilon\mu$.
\end{lem}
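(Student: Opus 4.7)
The plan is to encode each letter as an affine map on the degree. Define $g_\ell(d) = \alpha_\ell d + \beta_\ell$ with $(\alpha_\epsilon, \beta_\epsilon) = (1,1)$, $(\alpha_{\rho^j}, \beta_{\rho^j}) = (p^j, p^j)$, and $(\alpha_{\varphi^j}, \beta_{\varphi^j}) = (p^{j+1}, 2p^j)$, so that adjoining a letter $\ell$ to the left of a subword $v$ of degree $d$ yields $|\ell v| = g_\ell(d)$. Composing these maps starting from $|\mu|=2$ gives
\[
|w| = 2\alpha_w + \beta_w, \qquad \alpha_w = \prod_{i=1}^{n-1}\alpha_{\ell_i}, \qquad \beta_w = \sum_{i=1}^{n-1} t_i \beta_{\ell_i},
\]
where $t_i = \prod_{j<i}\alpha_{\ell_j}$ and every term in $\beta_w$ is strictly positive. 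For $n \leq 2$ one has $|w| \in \{2,3\}$, which is never of the form $4p^k$, so we may assume $n \geq 3$, in which case the rules force $\ell_{n-1} = \epsilon$ and $\ell_{n-2} = \rho^{k_1}$ for some $k_1 \geq 0$.

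Next, set $e_\epsilon = 0$, $e_{\rho^j} = j$, $e_{\varphi^j} = j+1$, so $\alpha_w = p^S$ with $S = \sum_{i=1}^{n-1} e_{\ell_i}$. From $|w| = 4p^k$ and $\beta_w \geq 0$ one gets $2p^S \leq 4p^k$, which forces $S \leq k$ since $p \geq 3$. The crucial step is to bound each individual contribution $t_i\beta_{\ell_i}$: letting $S^{<i} = \sum_{j<i}e_{\ell_j}$ and $S^{\leq i} = S^{<i} + e_{\ell_i}$, direct calculation shows $t_i\beta_{\ell_i}$ equals $p^{S^{<i}}$, $p^{S^{\leq i}}$, or $2p^{S^{\leq i}-1}$ according to whether $\ell_i$ is $\epsilon$, $\rho^{j_i}$, or $\varphi^{j_i}$; each is at most $p^S$, the $\varphi$ bound using $p \geq 3 > 2$.

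Summing gives $\beta_w \leq (n-1)p^S$, and substituting $\beta_w = 4p^k - 2p^S$ yields $4p^{k-S} \leq n+1$. If $k > S$ were possible, then $4p \leq n+1 \leq 2p+2$, forcing $p \leq 1$, a contradiction. Hence $S = k$ and $\beta_w = 2p^k$. But the forced letters $\ell_{n-2}$ and $\ell_{n-1}$ already contribute $p^S + p^S = 2p^k$ to $\beta_w$, so the remaining $n-3$ contributions must sum to zero; as each is strictly positive, this forces $n = 3$, and the identity $S = k_1 = k$ then gives $w = \rho^k\epsilon\mu$.

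The main technical obstacle will be verifying the three contribution formulas case by case and the bound $t_i\beta_{\ell_i}\leq p^S$; once the affine-function bookkeeping is in place, the final inequality $4p \leq 2p+2$ is tight enough that the odd-prime hypothesis is used exactly where needed, and no word of length between $4$ and $2p+1$ can realize the degree $4p^k$.
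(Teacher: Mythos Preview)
Your proof is correct and takes a genuinely different route from the paper's. The paper argues by case analysis on the leftmost letter: if $w=\rho^\ell\epsilon w'$ it reduces to $|w'|=4p^{k-\ell}-2$ and invokes Veen's structural lemma on words whose degree is $\equiv -2\pmod{2p}$ to rule out $\ell<k$ by length constraints, while $w=\varphi^\ell w'$ is dismissed because $p|w'|=4p^{k-\ell}-2$ has no solution for odd $p$. Your argument instead encodes each letter as an affine map on the degree, writes $|w|=2p^S+\beta_w$ with $\beta_w$ a sum of $n-1$ strictly positive terms each bounded by $p^S$, and derives the sharp inequality $4p^{k-S}\leq n+1\leq 2p+2$ to force $S=k$; then the observation that the two rightmost non-$\mu$ letters already saturate $\beta_w=2p^k$ forces $n=3$.

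The main virtue of your approach is that it is entirely self-contained: it does not appeal to Veen's classification of words with degree $\equiv -2\pmod{2p}$, and it handles all letter types uniformly rather than through separate cases. The paper's approach, by contrast, leans on established structural machinery and is perhaps more in the spirit of the surrounding arguments (which repeatedly invoke the same lemma of Veen), but each individual case requires its own length bookkeeping. Your bound $t_i\beta_{\ell_i}\leq p^S$ is the clean replacement for that bookkeeping, and the final step---noting that the forced suffix $\rho^{k_1}\epsilon$ already contributes exactly $2p^S$---is an elegant way to pin down the length.
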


\begin{proof}
  Since the total degree $|w|$ is even, $w$ must start with a $\rho^\ell$ or a $\varphi^\ell$.  Suppose first that $w = \rho^\ell \epsilon w'$.  If $\ell < k$ then
  $|w'| = 4p^{k-\ell}-2$,   so by \cite[Lemma
 7.2 part 5]{V} we know that $w'$ equals
  $(\rho^0\epsilon)^{[p-2]}\mu$ or starts with $(\rho^0\epsilon)^{[p-2]}\varphi^0$
  or $(\rho^0\epsilon)^{[p-1]}$.  In the first case $|w'|=2p-2$,
  which is not of the form $4p^{k-\ell}-2$.
In the second  case, the beginning of $w'$ is of length $2p-3$, but it requires a tail of length $3$ or more, and thus $\# w' \geq 2p$, which is not possible.  In the third case, the beginning is of length $2p-2$, and so the only way we could get $\# w' =2p-1$ is by having $w'=(\rho^0\epsilon)^{[p-1]}\mu$, but then $|w'|= 2p\neq 4p^{k-1}$ and this case is also impossible.

Thus $\ell = k$, so that $w'=\mu$ and $w=\rho^k \epsilon\mu$.

  Now suppose that $w = \varphi^\ell w'$.  Then $p|w'| = 4p^{k-\ell} - 2$.
  However, this can only happen when $p=2$, a contradiction.  So there are no
  such possible words $w$, and we are done.
\end{proof}

We have the following extension of Veen's Theorem
 7.6:

\begin{prop} \label{prop:pushing}
  When $n\leq 2p+2$ there are no non-trivial differentials in the
  spectral sequence of Theorem
 \ref{thm:Veen}, and there is an $\F_p$-Hopf algebra isomorphism
  $\pi_*(\Lambda_{\mathbb{S}^n}H\F_p) \cong B_n$.
\end{prop}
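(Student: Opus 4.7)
My plan is induction on $n$, with base case $n \leq 2p$ given by Veen's Theorem 7.6. For $n \in \{2p+1, 2p+2\}$ the inductive hypothesis yields $\pi_*(\Lambda_{\mathbb{S}^{n-1}} H\F_p) \cong B_{n-1}$, so the $E^2$-page of Veen's spectral sequence for step $n$ is $\mathrm{Tor}^{B_{n-1}}(\F_p,\F_p) = B_n$. It suffices to show that every $d^i$ with $i \geq 2$ vanishes and that no multiplicative extensions occur.

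Suppose for contradiction that $d^i$ is a first nonzero differential. By Lemma \ref{lem:supdif} there is a generator $b = \gamma_{p^j}(x) \in B_n$ of even total degree, of bidegree $(s,t)$ with $s \geq i \geq 2$, such that $d^i(b)$ is a nonzero $\F_p$-linear combination of exterior algebra generators. In $B_n$ these are precisely the elements $\epsilon w'$ with $w' \in B_{n-1}$ of even total degree, each of bidegree $(1,|w'|)$. Matching bidegrees forces $i = s-1$ and $|w'| = |b|-2$ for every $\epsilon w'$ appearing in $d^i(b)$. Using Definition \ref{defn:Bn} together with the parity requirement, $b$ must take one of two shapes: either $b = \rho^j \epsilon w''$ with $j \geq 1$ and $w'' \in B_{n-2}$ of even degree (so $i = p^j - 1$ and $|b| = p^j(2+|w''|)$), or $b = \varphi^j w$ with $j \geq 1$ and $w \in B_{n-1}$ of even degree beginning with $\rho^\ell$ or $\varphi^\ell$ (so $i = 2p^j - 1$ and $|b| = p^j(2 + p|w|)$).

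The crucial input is Lemma \ref{lem:powerwords}: each target word $w'$ lies in $B_{n-1}$ and has length $n-1 \leq 2p+1$, and if $|w'| = 4p^\ell$ for some $\ell \geq 0$ then $w' = \rho^\ell \epsilon \mu$, which has length $3$; in particular, for $n \geq 5$ no word of length $n-1$ has total degree of the form $4p^\ell$. I would then carry out a finite case analysis on $(j,w'')$ in the first template and on $(j,w)$ in the second, showing that the value $|b|-2$ is always either of the forbidden form $4p^\ell$ or else unattainable as the total degree of any length-$(n-1)$ word in $B_{n-1}$. The bounds rest on elementary $p$-adic valuation estimates on $p^j(2+|w''|) - 2$ and $p^j(2+p|w|)-2$, combined with the recursive description of the set of degrees realized by words of given length in $B_{n-1}$. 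Once collapse is established, the absence of multiplicative extensions follows from the Hopf algebra structure of the spectral sequence: $B_n$ is a tensor product over $\F_p$ of exterior algebras on odd-degree primitives and of truncated polynomial algebras $\F_p[y]/y^p$ on even-degree primitives, and such a primitively generated Hopf algebra over $\F_p$ admits no nontrivial multiplicative extensions compatible with the coproduct.

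The main obstacle is the second template, where the extra factor of $p$ inside $2 + p|w|$ produces a wider spread of candidate degrees $|b|$ and the Hopf algebra primitivity of $d^i(b)$ must be combined with Lemma \ref{lem:powerwords} to dispose of each remaining $(j,w)$. One also needs to verify that $|b|-2$ is absent not merely from the singular set $\{4p^\ell\}$ but from the full set of realizable total degrees of length-$(n-1)$ words in $B_{n-1}$. Finally, this approach fails at $n = 2p+3$: the length of words in $B_{2p+2}$ becomes just large enough to accommodate candidate targets that are not ruled out by Lemma \ref{lem:powerwords}, which is consistent with the computer evidence alluded to in the introduction and explains why the bound cannot be pushed beyond $n \leq 2p+2$ by the present method.
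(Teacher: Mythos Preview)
Your overall architecture matches the paper's: induction with base case $n\leq 2p$ from Veen, then for $n=2p+1,2p+2$ use Lemma~\ref{lem:supdif} to reduce to a source $b=\gamma_{p^j}(x)$ and a primitive target $\epsilon w'$, and invoke Lemma~\ref{lem:powerwords} in the case analysis. The paper organizes the analysis by first classifying the \emph{targets} via the congruence $|\beta|\equiv -1\pmod{2p}$ (Veen's Lemma~7.2 gives a short finite list of admissible prefixes for such words), and only then checks whether a matching source exists; you instead propose to enumerate sources and test whether $|b|-2$ is realizable. This is not wrong in principle, but your ``recursive description of the set of degrees realized by words of given length'' is exactly the content of Veen's Lemma~7.2, and without stating and using that mod-$2p$ classification you have no mechanism to make the case analysis finite: the superscripts in $b$ range over all of $\mathbb{Z}_{\geq 0}$, and ``elementary $p$-adic valuation estimates'' alone will not pin down which values of $|b|-2$ occur as degrees of length-$(n-1)$ words.

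There is a genuine error in your treatment of multiplicative extensions. The assertion that a primitively generated Hopf algebra over $\F_p$ ``admits no nontrivial multiplicative extensions compatible with the coproduct'' is false: if $y$ is an even-degree primitive then $y^p$ is again primitive (Frobenius), so the coproduct by itself cannot distinguish $y^p=0$ from $y^p=(\text{some other primitive of degree }p|y|)$. The paper handles this correctly: one shows that a minimal generator $z$ with $z^p\neq 0$ would force $z^p$ to be primitive of total degree $\equiv 0\pmod{2p}$, and then uses Veen's Lemma~7.5 to check that the shortest primitive word with such a degree (other than degree~$2p$) has length $2p+2$, and that in the $n=2p+2$ case no generator $z$ of the required degree $|z|=|w|/p$ exists. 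You need this explicit degree check; the Hopf-algebra structure alone does not suffice.
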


\begin{proof}
  For $n\leq 2p$, \cite[Theorem
 7.6]{V}  gives
  us exactly the desired   result.  Thus we simply need to analyze two cases:
  $n=2p+1$ and $n=2p+2$.  In
  order to extend Veen's argument to these cases, we will need to show that
  \begin{enumerate}
  \item there are no possible non-trivial differentials in the
    spectral sequence, and
  \item there are no possible multiplicative extensions.
  \end{enumerate}

  \begin{enumerate}
  \item Suppose that there exists a possible nonzero differential.  This means
    that there exists an indecomposable element $\alpha$ and a primitive element
    $\beta$ with $|\alpha| = |\beta|+1$; as discussed in Lemma \ref{lem:supdif}
    we can assume that $\alpha$ is of the form $\gamma_{p^k}(x)$, or in other words
    that it is of the form $\rho^k w$  or $\varphi^k w$ for some
    admissible word $w$ of length $2p$ or $2p+1$, respectively.  In order for there to be a
    differential which might 
    not be trivial 
    on $\alpha$, we must have $k\geq 1$, so
    $|\alpha|\equiv 0 \pmod{2p}$.

    Then $|\beta| \equiv -1\pmod{2p}$.  As $\beta$ is primitive it is a linear
    combination of words that start with $\epsilon$.  From \cite[Lemma
 7.2]{V} we
    know that a word with such a degree is either equal to $\epsilon
    (\rho^0\epsilon)^{[p-2]}\mu$ or starts with 
    $\epsilon(\rho^0\epsilon)^{[p-2]}\varphi^0$ or
    $\epsilon(\rho^0\epsilon)^{[p-1]}\rho^k$ or
   $\epsilon(\rho^0\epsilon)^{[p-1]}\varphi^k$  for some $k\geq 1$.
   The first of these has 
    length $2p-2$ so is not under consideration.  The second must end with a
    suffix which has length at least $3$, so we'll need to consider it in both
    cases.  The third and fourth possibilities must end with a suffix
    of length at least $2$, so we'll 
    only need to consider them in the $2p+2$ case.

    \caselist{1}{$n=2p+1$}. All words that can be the target of
    differentials must be of the 
    form
    \[\beta=\epsilon(\rho^0\epsilon)^{[p-2]}\varphi^0\rho^k\epsilon\mu
    \qquad k\geq 0.\] 
    This word has degree $4p^{k+1}+2p-1$.  Thus any possible
    differential comes from a word of degree $4p^{k+1}+2p$.  As $\alpha$ must
    start with a $\rho^k$ or a $\varphi^k$, we know that $\alpha$ must
    equal $\varphi^1w$, 
    where $\#w = 2p$ and $|w| = 4p^k$ or $\rho^1\epsilon w$, where $\#w = 2p-1$
    and $|w| = 4p^k$.  However, both of these cases are impossible by Lemma
    \ref{lem:powerwords}, so there are no possible differentials.

    \caselist{2}{$n=2p+2$}. We have two possible words that might
    be targets of differentials:
    \[\beta_1 &= \epsilon(\rho^0\epsilon)^{[p-2]}\varphi^0\varphi^k\rho^\ell
 \epsilon\mu, \\
    \beta_2 &= \epsilon(\rho^0\epsilon)^{[p-1]}\rho^{k+1} \epsilon \mu.\] In
    both cases, $k,\ell\geq 0$.  We have
    \[|\beta_1| = 4p^{k+\ell+2} + 2p^{k+1} + 2p-1 \qquad |\beta_2| = 4p^{k+1} +
    2p-1.\] Thus we have two possibilities for $\alpha$, with $|\alpha_1| =
    4p^{k+\ell+2}+2p^{k+1} + 2p$ and $|\alpha_2| = 4p^{k+1} + 2p$.  As
    $\alpha_2$ must start with a $\rho^k$ or a $\varphi^k$, $k\geq 1$,
    it must be 
    of the form $\rho^1\epsilon w$ or $\varphi^1 w$ for some $w$ of
    length $2p$ or $2p+1$, respectively, with $|w|=4p^k$ or $|w|=4p^{k-1}$. 
    But we know (by Lemma 
    \ref{lem:powerwords}) that this is impossible, so it remains to
    consider the first case, where  $\alpha_1$ must equal either $\rho^1\epsilon w$
    with $\#w = 2p$ and $|w| = 4p^{k+\ell+1} + 2p^k$ or $\varphi^1\rho^m
    \epsilon w$ with $\# w = 2p-1$ and $|w| = 4p^{k+\ell-m} + 2p^{k-m-1}-2$.

    \caselist{2a}{$\alpha_1 = \rho^1\epsilon w$.}  First, note that $w\neq
    \rho^a \epsilon w'$, because in this case $|w'| = 4p^{k+\ell-a+1} +
    2p^{k-a}-2$ and $\#w' = 2p-2$, and $|w'|$ is either equal to $4p^{k+\ell-a+1}$
    (which is a contradiction by Lemma 
    \ref{lem:powerwords} because $\#w'=2p-2 > p \geq 3$) or equivalent to
    $-2\bmod 2p$, which demands a word longer than $2p-2$.  Thus $w = \varphi^a
    w'$.  Then $p|w'| = 4p^{k+\ell-a+1} + 2p^{k-a}-2$, which means that $a = k$
    and $|w'| = 4p^{\ell+1}$.  But $\#w' = 2p-1 > 3$, a contradiction by Lemma
    \ref{lem:powerwords}, and so $w$ does not exist.
    
    \caselist{2b}{$\alpha_1 = \varphi^1\rho^m\epsilon w$.}  We know that $|w| =
 4p^{k+\ell-m} + 2p^{k-m-1}-2$. If $k=m+1$ then this is equal to
    $4p^{k+\ell-m} \geq 4p$, and by Lemma \ref{lem:powerwords} we know that no
    such $w$ exists.  If $k>m+1$ then $|w| \equiv -2 \pmod{2p}$ and we know by
    \cite[Lemma
 3.3.2 part 5]{V} that $w$ must start with
    $(\rho^0\epsilon)^{[p-2]}\varphi^0$ or
    $(\rho^0\epsilon)^{[p-1]}\rho^k$ or
    $(\rho^0\epsilon)^{[p-1]}\varphi^k$ for some $k\geq 1$.  However,
    there are no words of 
    length $2p-1$ that start with any of these prefixes, so $w$ cannot
    exist.

  \item
 To solve the multiplicative extension problem we need to determine what
    the $p$th powers of elements can be.  Let $z$ be a generator of lowest
    degree with $z^p\neq 0$.  Then we have
    \[\psi(z^p) = \psi(z)^p = 1\otimes z^p + z^p\otimes 1 + \sum (z')^p\otimes
 (z'')^p = 1\otimes z^p + z^p\otimes 1,\] so $z^p$ must be primitive.
    However, in addition we know that $|z^p| = p|z|$, so $|z^p| \equiv 0
\pmod{2p}$.  By the proof of \cite[Lemma 7.5]{V} the shortest primitive word
    with degree
    equivalent to $0$ modulo $2p$ of degree larger than $2p$ is equal to $w =
    (\rho^0\epsilon)^{[p-1]}\varphi^0\rho^k\epsilon\mu$ for $k\geq 1$.
    Thus it has 
    length $2p+2$, so we do not need to worry about multiplicative extensions in
    the $n=2p+1$ case.

    In the $n=2p+2$ case, we need some extra care.  The degree of $w$
    is $|w| =
4p^{k+1}+2p$, so we see that $|z| = 4p^k+2$.  Therefore $z =
    \rho^0\epsilon w$ or $z = \varphi^0\rho^\ell w$.  In the first
    case we 
    have $\#w=2p$ and $|w| = 4p^k$, so by Lemma \ref{lem:powerwords} this cannot
    happen.  In the second case, we can deduce $\#w = 2p$ and $|w| =
    4p^{k-\ell-1}-1$.  Note that we must have $k-\ell-1 > 0$, as otherwise this
    clearly cannot happen. But then we know that $|w| \equiv -1 \pmod{2p}$, and
    by \cite[Lemma
 7.5]{V} it must have length at least $2p+1$.
    Thus such a word
    does not exist, and we see that there are no multiplicative extensions when
    $n=2p+2$, either.
  \end{enumerate}
\end{proof}

As we mentioned above,  it is not possible to continue pushing the
bound using this type of 
analysis, and while the spectral sequence may continue to collapse for
$n>2p+2$ (as we believe it will) we cannot deduce this purely from degree
considerations: 

\begin{prop} \label{prop:n=2p+3}
  For $n= 2p+3$ there is a potential non-trivial differential.
\end{prop}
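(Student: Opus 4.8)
The plan is to carry out the same bidegree bookkeeping as in the proof of Proposition~\ref{prop:pushing}, but now to \emph{produce} a compatible pair rather than rule one out. By Lemma~\ref{lem:supdif} a hypothetical first nontrivial differential in Veen's spectral sequence computing $\pi_*(\Lambda_{\mathbb{S}^{2p+3}}H\fp)$ from $\pi_*(\Lambda_{\mathbb{S}^{2p+2}}H\fp)\cong B_{2p+2}$ would carry an indecomposable element $\alpha$ of even total degree, necessarily a divided power generator $\gamma_{p^k}(x)$ with $k\geq 1$ (so a word $\rho^k w$ or $\varphi^k w$), to a nonzero primitive combination of exterior generators, i.e. of $\epsilon$-words; and for this to be possible one needs an $\epsilon$-word $\beta$ with $|\alpha|=|\beta|+1$ whose bidegree is the target bidegree of the relevant page. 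So it is enough to exhibit one such pair of admissible words of length $2p+3$.

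I would take
\[
\alpha=\varphi^1(\rho^0\epsilon)^{[p-1]}\varphi^0\rho^1\epsilon\mu
\quad\text{and}\quad
\beta=\epsilon(\rho^0\epsilon)^{[p-1]}\varphi^3\rho^0\epsilon\mu,
\]
both of length $2p+3$. Writing $w=(\rho^0\epsilon)^{[p-1]}\varphi^0\rho^1\epsilon\mu$, the recursive degree formulas give $|w|=4p^2+2p$, so $\alpha=\varphi^1 w$ has bidegree $p(2,p|w|)=(2p,\,4p^4+2p^3)$, while $\beta$ has bidegree $(1,\,4p^4+2p^3+2p-2)$; in particular $|\alpha|=4p^4+2p^3+2p=|\beta|+1$. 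Since the first (homological) coordinate of the bidegree of $\alpha$ equals $2p$, the only page on which $\alpha$ can support a differential landing on an exterior generator is $E^{2p-1}$, and there $d^{2p-1}$ shifts the bidegree of $\alpha$ to $(2p-(2p-1),\,4p^4+2p^3+(2p-1)-1)=(1,\,4p^4+2p^3+2p-2)$, which is exactly the bidegree of $\beta$; and $2p-1\geq 2$, so this is an allowed differential page.

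Finally one checks that none of the obstructions used in the proof of Proposition~\ref{prop:pushing} applies: $\alpha$ is indecomposable, so the Leibniz rule places no constraint on $d^{2p-1}(\alpha)$; $\beta$ is an $\epsilon$-word, hence primitive, so it is an admissible summand of the (necessarily primitive) target $d^{2p-1}(\alpha)$; and, unlike in the cases $n\leq 2p+2$, the length bound in Lemma~\ref{lem:powerwords} and the degree classification of \cite[Lemma~7.2]{V} no longer preclude such $\alpha$ and $\beta$ of length $2p+3$ — the displayed words realize them. Consequently $d^{2p-1}(\alpha)$ is permitted to have a nonzero component on $\beta$, i.e. there is a potential non-trivial differential. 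The only real work is the discovery of such an explicit pair, which the degree bookkeeping makes somewhat delicate and which was originally located by the computer search referred to in the introduction; once a pair is written down, everything else is a direct computation with the recursive bidegree formulas.
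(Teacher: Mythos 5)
Your proof is correct and follows exactly the same approach as the paper: exhibit a pair of admissible length-$(2p+3)$ words whose bidegrees align for a $d^{2p-1}$ differential, with the source an indecomposable divided-power generator and the target an $\epsilon$-word (primitive). The explicit pair you found, $\alpha=\varphi^1(\rho^0\epsilon)^{[p-1]}\varphi^0\rho^1\epsilon\mu$ and $\beta=\epsilon(\rho^0\epsilon)^{[p-1]}\varphi^3\rho^0\epsilon\mu$, differs from the paper's example but is equally valid — the bidegree arithmetic $||\alpha||=(2p,4p^4+2p^3)$, $||\beta||=(1,4p^4+2p^3+2p-2)$ checks out.
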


\begin{proof}
  Let
  \[w = \varphi^1(\rho^0\epsilon)^{[p-1]}\varphi^0\rho^0\epsilon\mu \mathand v =
  \epsilon(\rho^0\epsilon)^{[p-2]}\varphi^0\rho^2\epsilon \rho^0\epsilon\mu.\]
  We have
  \[ ||w||= (2p, 6p^3)  \mathand ||v|| = (1, 6p^3+2p-2).\]
  Thus we have a  differential $d^{2p-1}$ in
  the spectral sequence that is potentially non-trivial.

\end{proof}

\begin{rem}
  We do not claim that this is the shortest possible differential.  It may be
  that for more complicated words there exist shorter possible differentials;
  indeed, at $n=2p+4$ it is easy to find potential differentials of
  length $p-1$.
\end{rem}

We found the above potential differential using a computer program
written in Haskell; we 
include the code in Appendix \ref{app:code}.

\section{$\THH^{[n]}(\F_2)$, up to $n=3$ and a stable element}
Marcel B\"okstedt showed \cite{B} that $\THH$ of $\F_2$ is isomorphic to a
polynomial algebra on a generator in degree $2$, $\F_2[\mu]$. Using Torleif
Veen's  \cite{V} spectral sequence
$$E^2_{r,s}=\mathrm{Tor}_{r,s}^{\THH_*^{[n]}(\F_2)}(\F_2,\F_2)
\Rightarrow  \THH_{r+s}^{[n+1]}(\F_2)$$ 
we obtain
$$ \THH_*^{[2]}(\F_2) \cong \F_2[\beta]/\beta^2$$
where $\beta$ is a generator in degree three (see also
\cite[Proposition
 2.3.1]{Vthesis}).

Using Proposition \ref{truncatedodd} we get a spectral sequence
calculating $\THH_*^{[3]}(\F_2)$ with $E^2$-term
$$\mathrm{Tor}_{*,*}^{\THH_*^{[2]}(\F_2)}(\F_2,\F_2) \cong \bigotimes_{i\geq 0}
\F_2[\gamma_{2^i}(x)]/\gamma_{2^i}(x)^2,  \, \text{ with } |x|=4.$$
The generators are concentrated in bidegrees of the form $(k,3k)$ so
there are no non-trivial differentials and the  spectral sequence
collapses. Also, since the only possible products are those which are
detected by the $E^\infty$ term, there are no multiplicative extension
issues, so we get: 

\begin{prop} Let $x$ denote a generator in degree $4$, then
$$ \THH^{[3]}_*(\F_2) \cong \bigotimes_{i\geq 0}
\F_2[\gamma_{2^i}(x)]/\gamma_{2^i}(x)^2.$$
\end{prop}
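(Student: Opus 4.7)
The plan is to run Veen's spectral sequence (Theorem \ref{thm:Veen}) one more time, starting from the known input $\THH^{[2]}_*(\F_2)\cong \F_2[\beta]/\beta^2=\Lambda(\beta)$ with $|\beta|=3$, and argue that both the differentials and the multiplicative extensions are obstructed purely by bidegree considerations.

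First I would identify the $E^2$-term. Since $|\beta|=3$ is odd, Proposition \ref{truncatedodd} applies directly, yielding
\[
E^2_{*,*}=\Tor^{\Lambda(\beta)}_{*,*}(\F_2,\F_2)\cong \G(\rho^0\beta),\qquad |\rho^0\beta|=4.
\]
Over $\F_2$ the divided power algebra splits as $\G(\rho^0\beta)\cong\bigotimes_{i\ge 0}\F_2[\gamma_{2^i}(\rho^0\beta)]/(\gamma_{2^i}(\rho^0\beta))^2$, which is the claimed form with $x=\rho^0\beta$.

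Next I would establish collapse at $E^2$. By the bidegree formula $\|\rho^i w\|=p^i(1,|w|)$ from Section \ref{sec:Veen}, the generator $\gamma_{2^i}(x)=\rho^i\beta$ sits at bidegree $(2^i,3\cdot 2^i)$, so every monomial of $E^2$ lies on the line $(k,3k)$. A putative nonzero differential $d^r$ with $r\ge 2$ would take $(k,3k)$ to $(k-r,3k+r-1)$, and this target must again be of the form $(k',3k')$; solving $3(k-r)=3k+r-1$ gives $4r=1$, which has no integer solution. Hence $E^\infty=E^2$.

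Finally I would rule out multiplicative extensions. The only candidate is $\gamma_{2^i}(x)^2$, which vanishes in $E^\infty$ (the binomial coefficient $\binom{2^{i+1}}{2^i}$ is even). If it were nonzero in the abutment, it would be represented in some $E^\infty_{p',q'}$ with $p'>2^{i+1}$ and $p'+q'=8\cdot 2^i$; but every nonzero bidegree in $E^\infty$ satisfies $q'=3p'$, forcing $p'=2^{i+1}$, a contradiction. Thus $\gamma_{2^i}(x)^2=0$ in $\THH^{[3]}_*(\F_2)$ and the algebra structure is exactly that of $E^\infty$.

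I do not anticipate any serious obstacle: once the $E^2$-term is identified via Proposition \ref{truncatedodd}, everything is governed by the very rigid line $(k,3k)$ of bidegrees, which simultaneously kills differentials and extension problems. The one small point to verify carefully is the use of Proposition \ref{truncatedodd} in odd degree, which is precisely the case covered by its original statement.
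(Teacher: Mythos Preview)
Your proposal is correct and follows essentially the same approach as the paper: identify the $E^2$-term via Proposition \ref{truncatedodd}, observe that everything lies on the line $(k,3k)$ so there can be no differentials, and use the same line to exclude multiplicative extensions. Your version is more explicitly worked out than the paper's.

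One small slip to fix: in the extension argument the filtration inequality goes the other way. If $\gamma_{2^i}(x)^2$ vanishes in $E^\infty_{2^{i+1},3\cdot 2^{i+1}}$ but is nonzero in the abutment, it must be detected in \emph{lower} filtration, i.e.\ in some $E^\infty_{p',q'}$ with $p'<2^{i+1}$ (not $p'>2^{i+1}$). This does not affect your conclusion, since the constraint $q'=3p'$ together with $p'+q'=2^{i+3}$ forces $p'=2^{i+1}$ regardless, leaving no room for an extension in either direction.
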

As
$$\mathrm{Tor}^{\bigotimes_{i\geq 0}
\F_2[\gamma_{2^i}(x)]/\gamma_{2^i}(x)^2}(\F_2,\F_2) \cong \bigotimes_{i\geq 0}
\mathrm{Tor}^{\F_2[\gamma_{2^i}(x)]/\gamma_{2^i}(x)^2}(\F_2,\F_2)$$
we have to understand the single factors first.
For each factor of the tensor product, by Proposition \ref{truncatedeven}
$$ \mathrm{Tor}^{\F_2[\gamma_{2^i}(x)]/\gamma_{2^i}(x)^2}(\F_2,\F_2) \cong
\bigotimes_{j\geq 0} \F_2[\gamma_{2^j}(y_i)]/\gamma_{2^j}(y_i)^2 \cong
\Gamma_{\F_2}(y_i)$$
with the $y_i$'s being elements of bidegree $(1,2^{i+2})$. But the
$E^2$-term is now a tensor product of these building blocks
$$ E^2_{*,*} \cong \bigotimes_{i\geq 0} \Gamma_{\F_2}(y_i) \cong
 \bigotimes_{i\geq 0}\bigotimes_{j\geq 0}
\F_2[\gamma_{2^j}(y_i)]/\gamma_{2^j}(y_i)^2 $$ thus excluding non-trivial
 differentials is harder.

\begin{lem}
The elements in the first column of the
spectral sequence
$$E^2_{*,*} = \Tor^{\THH_*^{[3]}}(\F_2, \F_2) \Longrightarrow
\THH_*^{[4]}(\F_2, \F_2)$$
are not in the image of $d^r$ for any $r$.
\end{lem}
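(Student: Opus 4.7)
The plan is to prove the stronger statement that for every $r\geq 2$ the differential $d^r$ vanishes on all of $E^r_{r+1,\ast}$, whence \emph{a fortiori} nothing in the first column can lie in its image. Recall that basis elements of $E^2=\bigotimes_{i\geq 0}\Gamma_{\F_2}(y_i)$ are products of generators $\gamma_{2^j}(y_k)$, with $y_k$ of bidegree $(1,2^{k+2})$ and $\gamma_{2^j}(y_k)$ of bidegree $(2^j,2^{j+k+2})$, so a basis element in column $r+1$ must satisfy $\sum_a 2^{j_a}=r+1$. Since this is a Hopf algebra spectral sequence (Theorem~\ref{thm:Veen}), $d^r$ is a derivation on $E^r$.

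The heart of the argument is a column-vanishing observation: every generator $\gamma_{2^j}(y_k)$ with $2^j\leq r$ satisfies $d^r(\gamma_{2^j}(y_k))=0$. Indeed $d^r$ would send it to bidegree $(2^j-r,\ast)$ with $2^j-r\leq 0$, but $E^r_{0,\ast}$ is a subquotient of $E^2_{0,\ast}=\Tor_0^{\THH^{[3]}_*(\F_2)}(\F_2,\F_2)=\F_2$ concentrated at $(0,0)$, and the total degree of $\gamma_{2^j}(y_k)$ is $2^j(1+2^{k+2})\geq 5$, so its image under $d^r$ has positive total degree and cannot land in $E^r_{0,0}$ (it is automatically zero in negative columns).

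Leibniz then propagates this: if a basis element $\alpha\in E^r_{r+1,\ast}$ is a product of $n\geq 2$ generators, each factor has column at most $r$, so by the previous step $d^r$ annihilates every factor, and hence $d^r(\alpha)=0$. What remains is the case where $\alpha=\gamma_{2^j}(y_k)$ is a single generator with $2^j=r+1$; this requires $r+1$ to be a power of two, i.e.\ $r\in\{3,7,15,\ldots\}$ (the case $r=2$ then has no single-generator source, since $3$ is not a power of $2$).

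Finally, for these remaining cases (so $j=\log_2(r+1)\geq 2$), a $2$-adic valuation check finishes the job. A non-trivial image $d^r(\gamma_{2^j}(y_k))=c\,y_i$ in column $1$ would require the second-bidegree equation $2^j(1+2^{k+2})=2^{i+2}+2=2(2^{i+1}+1)$; but $1+2^{k+2}$ and $2^{i+1}+1$ are both odd, so the two sides have $2$-adic valuations $j$ and $1$ respectively, forcing $j=1$ and contradicting $j\geq 2$. The main obstacle I anticipate is simply bookkeeping in the column-vanishing step to ensure nothing nontrivial lives in $E^r$ below column $1$ at positive total degree, but since column $0$ of $E^2$ is concentrated at $(0,0)$ and no incoming differential can change this, it is handled automatically.
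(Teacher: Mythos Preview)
Your proof is correct and follows essentially the same approach as the paper's. Both arguments use the derivation property together with the fact that $E^r_{0,*}$ is concentrated at $(0,0)$ to reduce to checking single generators $\gamma_{2^j}(y_k)$ with $2^j=r+1$, and then finish with the identical $2$-adic valuation computation (your equation $2^j(1+2^{k+2})=2(2^{i+1}+1)$ is exactly the paper's internal-degree constraint, rewritten). Your organization---first isolating the column-vanishing observation for generators in columns $\leq r$, then invoking Leibniz---is slightly more explicit than the paper's phrasing, but the content is the same.
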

\begin{proof}
The spectral sequence is a bar spectral sequence and the filtration
that gives rise to it is compatible with the multiplication in the bar
construction. Therefore the spectral sequence is (at least) one of algebras. It
therefore suffices to show that none of the indecomposable elements can
hit anything in the first column.  Note that the only elements on the first column are the $y_i$'s.

The bidegree of an element $\gamma_{2^j}(y_i)$ is $(2^j,2^j\cdot 2^{i+2})$
and if a $d^r(\gamma_{2^j}(y_i))$ is in the first column for $r\geq 2$ then $r=2^j-1$ and the relation in the internal
degree forces $2^j(2^{i+1}+1)-2$ to be of the form $2^{k+2}$. Since
$r\geq 2$, we must have $j\geq 2$, but then
$2^j(2^{i+1}+1)-2= 2(2^{j-1}(2^{i+1}+1)-1)$ is not of the form
$2^{k+2}$.

So no indecomposable element hits anything in the
first column. Products of such elements cannot hit a $y_i$ either,
because this would decompose $y_i$ (the spot $(0,0)$ cannot be hit by
a differential for degree reasons),  so all the $y_i$ must survive to the $E^\infty$ term.
\end{proof}
\begin{rem}
Veen \cite[Proposition 3.5]{V} describes the stabilization map
$$ \sigma\colon \THH_*^{[n]}(R) \ra  \THH_{*+1}^{[n+1]}(R)$$ for every commutative
ring spectrum $R$. It sends a class $[z] \in  \THH_q^{[n]}(R)$ to the
element in $\THH_{q+1}^{[n+1]}(R)$ that corresponds to $1 \otimes [z]
\otimes 1 \in B_1(\pi_0(R), \THH_q^{[n]}(R), \pi_0(R))$. From the first cases we
can read off that $\sigma$ sends $\mu \in \THH_2^{[1]}(\F_2)$ to
$\beta \in \THH_3^{[2]}(\F_2)$ and $\beta$ to $x \in
\THH_4^{[3]}(\F_2)$. We know that the $y_i$'s give rise to non-trivial
elements in $\THH_{1+2^{i+2}}^{[4]}(\F_2)$ and that $\sigma(x) =
 y_0 \in \THH_{5}^{[4]}(\F_2)$.
\end{rem}
\begin{prop} \label{prop:stable}
The iterative classes $\sigma^i(y_0)$ are all non-trivial and therefore  give
rise to a non-trivial class in topological Andr\'e-Quillen homology,
$\TAQ$,
$$ \TAQ_1(\F_2) := \varinjlim_n \THH_{1+n}^{[n]}(\F_2).$$
\end{prop}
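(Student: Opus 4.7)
The plan is to prove, by induction on $i \geq 0$, that $\sigma^i(y_0)$ is nontrivial in $\THH^{[4+i]}_{5+i}(\F_2)$ and is detected in bar filtration $1$ of the bar spectral sequence computing it (and is therefore indecomposable in the Hopf algebra). The base case $i=0$ is immediate from the preceding lemma, which asserts precisely that $y_0 \in E^2_{1,4}$ of the spectral sequence computing $\THH^{[4]}(\F_2)$ survives to $E^\infty$.

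For the inductive step, suppose $\sigma^i(y_0) \in \THH^{[4+i]}_{5+i}(\F_2)$ is a nonzero bar-filtration-$1$ class, hence indecomposable. Since the stabilization map is defined on the level of bar constructions by $z \mapsto 1\otimes z \otimes 1$, the class $\sigma^{i+1}(y_0)$ is detected in bar filtration $1$ of the spectral sequence
$$E^2_{r,s} = \Tor^{\THH^{[4+i]}_*(\F_2)}_{r,s}(\F_2,\F_2) \Longrightarrow \THH^{[5+i]}_{r+s}(\F_2)$$
by a nonzero element $\alpha \in E^2_{1, 5+i}$, using the identification $E^2_{1,*} \cong \Sigma Q(\THH^{[4+i]}_*(\F_2))$ of the first column with the suspended indecomposables. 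I would then show that $\alpha$ is a permanent cycle by mimicking the strategy of the preceding lemma: because $\alpha$ is primitive and the spectral sequence is one of Hopf algebras, the Leibniz rule forces the source of any minimal-degree nonzero differential $d^r$ ($r\geq 2$) hitting $\alpha$ to be an indecomposable generator in bidegree $(1+r,\, 5+i-r)$, and one rules such a generator out by direct bidegree analysis.

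The main obstacle is carrying out these bidegree exclusions uniformly in $i$, since $\THH^{[4+i]}_*(\F_2)$ is not explicitly known here for large $i$. The key technical observation is that the recursive flowchart of Section~\ref{sec:Veen}, specialized to $p=2$ (where divided power algebras decompose into tensor products of $2$-truncated polynomial generators of the form $\rho^k w$ and $\varphi^k w$), constrains the bidegrees of indecomposable generators in an iterated $\Tor$ algebra by explicit $2$-adic formulas. One should be able to verify that no generator of bidegree $(1+r,\, 5+i-r)$ exists to serve as a source, by the same minimality-in-total-degree argument that worked in the base case, since $\alpha$ is essentially the image of a shortest admissible word ending in $\mu$. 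Once $\sigma^i(y_0)\neq 0$ is established for all $i\geq 0$, the conclusion $\TAQ_1(\F_2)\neq 0$ is immediate from the colimit definition $\TAQ_1(\F_2) = \varinjlim_n \THH^{[n]}_{1+n}(\F_2)$, since the classes $\sigma^i(y_0)$ form a compatible nonzero system.
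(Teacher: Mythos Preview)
Your overall strategy matches the paper's: argue by induction that $\sigma^i(y_0)$ is detected in bar filtration~$1$ of the spectral sequence for $\THH^{[4+i]}(\F_2)$, note that any such class is automatically a cycle, and then exclude by bidegree any source for a differential hitting it.  The gap is in how you propose to carry out that exclusion.  You acknowledge that $\THH^{[4+i]}_*(\F_2)$ is not known and then appeal to ``the recursive flowchart of Section~\ref{sec:Veen}'' to constrain bidegrees of indecomposable generators ``in an iterated Tor algebra.''  But the flowchart describes the algebras $B_n$, i.e.\ the iterated $\Tor$ assuming all previous spectral sequences collapse; since we do not know that, you cannot read off the bidegrees of generators of $E^2=\Tor^{\THH^{[4+i]}_*}(\F_2,\F_2)$ from it.  So the step ``verify that no generator of bidegree $(1+r,5+i-r)$ exists'' has no foundation.

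The paper closes this gap with a purely additive observation: $E^\infty$ is a subquotient of $E^2$, so the set of total degrees occurring in $\THH^{[n]}_*(\F_2)$ is contained in the set occurring in $E^2$.  Iterating this from $\THH^{[3]}_*(\F_2)$, one finds that the minimal positive degree in $\THH^{[n]}_*(\F_2)$ is at least $n+1$, regardless of whether earlier spectral sequences collapsed.  Hence in the normalized bar complex on $\THH^{[\ell+2]}_*(\F_2)$ any class in homological degree $r\geq 2$ has internal degree at least $r(\ell+3)\geq 2\ell+6>\ell+2$, while every source bidegree $(s+1,\ell+4-s)$ for a differential hitting $y_{0,\ldots,0}$ has internal degree at most $\ell+2$.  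That is the missing idea.  A smaller point: your assertion that the source of a differential hitting $\alpha$ must be indecomposable is not what the Hopf-algebra/minimality argument of Lemma~\ref{lem:supdif} gives; it shows only that a minimal nonzero differential starts on an indecomposable, not that it lands on $\alpha$.  The paper sidesteps this by showing directly that the relevant source bidegrees are empty, which is what the subquotient bound above accomplishes.
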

\begin{proof}
We know that the classes $\sigma^i(y_0)$ are always cycles in the
corresponding spectral sequences, so we have to show that they cannot
be hit by any differential.
We do not know whether the $\gamma_{2^j}(y_i)$'s survive but we know that
the $E^\infty$-term is a subquotient of the $E^2$-term and hence we
get at most elements in $\THH^{[4]}_*(\F_2)$ that have a total degree
corresponding to products of the $\gamma_{2^j}(y_i)$'s. By an iteration of
this argument we can calculate possible bidegrees of elements that would
arise if there were no non-trivial differentials. Let $\ell$ be bigger
or equal to two and consider elements
$\gamma_{2^{i_{\ell+1}}}(y_{i_1,\ldots,i_\ell})$ of
bidegree
$$(2^{i_{\ell+1}}, 2^{i_{\ell+1}}(2^{i_{\ell}} +
2^{i_{\ell}+i_{\ell-1}}+\ldots+2^{i_{\ell}+i_{\ell-1}+\ldots+i_2}+2^{i_{\ell}+
  i_{\ell-1}+ \ldots+i_2+i_1+2})).$$
A product of elements
$\gamma_{2^{i_{1, \ell+1}}}(y_{i_{1,1},\ldots,i_{1,\ell}})$ up to
$\gamma_{2^{i_{m, \ell+1}}}(y_{i_{m,1},\ldots,i_{m,\ell}})$
then has homological degree
$\sum_{j=1}^r 2^{i_{j, \ell+1}}$ and internal degree
$$ \sum_{j=1}^r 2^{i_{j,\ell+1}+i_{j,\ell}} + \ldots +
\sum_{j=1}^r 2^{i_{j,\ell+1}+i_{j,\ell}+\ldots+i_{j,2}} +
\sum_{j=1}^r
2^{i_{j,\ell+1}+i_{j, \ell}+\ldots+i_{j,2}+i_{j,1}+2}. $$

We know that $y_{0,\ldots,0} = \gamma_{2^0}(y_{0,\ldots,0})$ has bidegree
$(1,\ell-1+4)=(1,\ell+3)$. If a differential $d^s$ hits this element,
then it has to start in something of bidegree $(1+s,\ell+3-s+1)=
(s+1,\ell+4-s)$. For $s\geq 2$ the only possible bidegrees are
$(3,\ell+2)$ up to $(\ell+5,0)$.

The element $\gamma_2(y_{0,\ldots,0})$ has bidegree $(2,(\ell-1)2+8) =
(2,\ell + (\ell+6))$  and as $\ell$ is at least $2$ the internal
degree is already
larger than $\ell+2$, so this element cannot be a suitable
source for a nontrivial differential.  All other potential
bidgrees have larger internal degree, thus there are no non-trivial
differentials.

\end{proof}

Maria Basterra and Michael Mandell calculated $\TAQ_*(H\F_p)$ for every
prime $p$ (see \cite[\S 6]{lazarev} for a written account) and there
is precisely one generator in $\TAQ_1(H\F_p)$.

\begin{rem}
For odd primes $p$ it is easy to see that the generator $\mu \in
\THH_2(\F_p)$ stabilizes to a non-trivial class in
$\TAQ_1(H\F_p)$. The stabilizations of $\mu$ are represented by the words 
$((\rho^0\epsilon)^\ell\mu)$ and
$(\epsilon(\rho^0\epsilon)^\ell\mu)$ in the spectral sequences (for
some $\ell$), so 
we have to show that these elements cannot be hit by any 
differential. Both types of elements are of bidegree $(1,m)$ for some $m$. If
$d^r\colon E^r_{s,t} \ra E^r_{s-r,t+r-1}$ should hit an element in
such a spot, then we get $s=r+1$ and $t=m-r+1$. As $r$ is greater or equal to
$2$, the differential can only start from bidegrees of the form
$(3,m-1),\ldots,(m+2,0)$. If  a term arises in the same  spectral
sequence as a stabilization of $\mu$ with bidegree $(1,m)$, then it is
generated by words of  
length $m$, which means that it has internal degree at least $m$. But
such terms  cannot hit a term with bidegree
$(1,m)$, so the stabilizations of $\mu$ survive. 
\end{rem}

\section{A splitting of $\THH^{[n]}(A[G])$ for abelian groups $G$}
\label{sec:splitting}
If $G$ is an abelian group, then the suspension spectrum of $G_+$ is an
$E_\infty$ ring spectrum, so it can be made into a commutative
$S$-algebra $S^0[G]$ for instance by the methods of \cite{EKMM}.  If
$R$ is another 
commutative $S$-algebra, so is $R\wedge S^0[G]$.  Applying the
formula for the product of two simplicial objects, we get that for any
$n$ and any commutative $S$-algebras $A$ and $B$,
$$\THH^{[n]}(A\wedge B) \simeq \THH^{[n]}(A) \wedge  \THH^{[n]}(B),$$
which in our case yields
$$\THH^{[n]}(R\wedge S^0[G]) \simeq   \THH^{[n]}(R) \wedge  \THH^{[n]}( S^0[G]).$$
If $R$ is a general $S$-algebra, we could take $R\wedge S^0[G]$ with
coordinate-wise product to be the definition of $R[G]$.  If
$R=HA$ is the Eilenberg Mac Lane spectrum of a commutative ring, this
is a model of the Eilenberg Mac Lane spectrum $H(A[G])$.  This is
because $HA\wedge S^0[G]$ has only one nontrivial stable homotopy
group;  $HA\wedge S^0[G]$ is the coproduct in the category of
commutative $S$-algebras so the obvious inclusions
induce a map of commutative $S$-algebras $HA\wedge S^0[G] \to
H(A[G])$ which  induces a multiplicative isomorphism on that unique
nontrivial homotopy group. The product on an Eilenberg Mac Lane
spectrum is determined by what it does on the unique nontrivial
homotopy group,
so we get
\begin{equation} \label{eq:split}
\THH^{[n]}(A[G]) \simeq   \THH^{[n]}(A) \wedge  \THH^{[n]}( S^0[G]).
\end{equation}
As usual, when we talk of the topological Hochschild homology of a
ring, we mean the topological Hochschild homology of its Eilenberg Mac
Lane spectrum.

\begin{prop}
If $A$ is a commutative $\F_p$-algebra, then for any $n\geq 1$ and any
abelian group $G$,
$$\THH^{[n]}_*(A[G]) \cong \THH^{[n]}_* (A)\otimes \HH^{[n]}_* (\F_p[G]).$$
\end{prop}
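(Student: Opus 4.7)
The plan is to start from the smash splitting \eqref{eq:split},
$$\THH^{[n]}(A[G]) \simeq \THH^{[n]}(A) \wedge_S \THH^{[n]}(S^0[G]),$$
and manipulate the right hand side until its homotopy groups are visibly $\THH^{[n]}_*(A) \otimes_{\F_p} \HH^{[n]}_*(\F_p[G])$. The two key moves are: inserting an $H\F_p$ between the two factors in order to exploit the hypothesis that $A$ is an $\F_p$-algebra, and identifying $H\F_p \wedge_S \THH^{[n]}(S^0[G])$ with the Eilenberg--MacLane spectrum of $\HH^{[n]}_*(\F_p[G])$.

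First, since $A$ is an $\F_p$-algebra, the unit $H\F_p \to HA$ factors through $\THH^{[n]}(\F_p)$ and makes $\THH^{[n]}(A)$ into an $H\F_p$-module spectrum. Consequently
$$\THH^{[n]}(A) \wedge_S \THH^{[n]}(S^0[G]) \simeq \THH^{[n]}(A) \wedge_{H\F_p} \bigl(H\F_p \wedge_S \THH^{[n]}(S^0[G])\bigr).$$
This is the only place where the hypothesis on $A$ is used.

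Next, I would identify the bracketed factor with the Eilenberg--MacLane spectrum of $\HH^{[n]}_*(\F_p[G])$. The observation is that the commutative $H\F_p$-algebra $H\F_p[G] \simeq H\F_p \wedge_S S^0[G]$ is the base change of the commutative $S$-algebra $S^0[G]$ along $S \to H\F_p$. Since the Loday construction $\Lambda_{S^n}^R(-) = (-) \otimes_R S^n$ is, as a functor $\mathrm{CAlg}_R \to \mathrm{CAlg}_R$, a left adjoint (to cotensor with $S^n$), it commutes with base change, so
$$H\F_p \wedge_S \THH^{[n]}(S^0[G]) \simeq \Lambda_{S^n}^{H\F_p}(H\F_p[G]).$$
Comparing the simplicial model of this $H\F_p$-algebra spectrum levelwise with the simplicial Loday construction for $\F_p[G]$ over $\F_p$ as in Pirashvili and in the Brun--Carlsson--Dundas formulation recalled in the introduction, one sees that its homotopy groups are exactly $\HH^{[n]}_*(\F_p[G])$.

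Finally, both $\THH^{[n]}(A)$ and $\Lambda_{S^n}^{H\F_p}(H\F_p[G])$ are $H\F_p$-module spectra; since $\F_p$ is a field every such module is a wedge of suspensions of $H\F_p$, so the K\"unneth isomorphism $\pi_*(M \wedge_{H\F_p} N) \cong \pi_* M \otimes_{\F_p} \pi_* N$ holds without Tor corrections and yields the claim. The step I expect to require the most technical care is the base-change identification: one has to be precise about which category of commutative algebras the Loday construction is being taken in, and to verify that the spectral Loday construction $\Lambda_{S^n}^{H\F_p}(H\F_p[G])$ really has as homotopy groups the classical higher Hochschild homology $\HH^{[n]}_*(\F_p[G])$ of the discrete $\F_p$-algebra $\F_p[G]$ over $\F_p$ used throughout the paper.
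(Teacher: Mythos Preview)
Your proposal is correct and follows essentially the same route as the paper: both insert $H\F_p$ into the smash splitting \eqref{eq:split}, identify $\pi_*(H\F_p \wedge \THH^{[n]}(S^0[G]))$ with $\HH^{[n]}_*(\F_p[G])$, and then use K\"unneth over the field $\F_p$. The only cosmetic differences are that the paper packages the final K\"unneth step as a (trivially collapsing) $\Tor^{\F_p}$ spectral sequence, and justifies the identification by applying K\"unneth levelwise to the simplicial spectrum rather than by your base-change-of-Loday argument.
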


\begin{proof}
We can rewrite the splitting in \eqref{eq:split} above as
$$\THH^{[n]}(A[G]) \simeq   \THH^{[n]}(A) \wedge_{H\F_p} H\F_p\wedge
\THH^{[n]}( S^0[G]),$$
which yields a spectral sequence with $E^2$-term 
$$\mathrm{Tor}_{*,*}^{\F_p}( \THH^{[n]}_* (A), \pi_* (H\F_p\wedge
\THH^{[n]}( S^0[G])) ) \cong
\THH^{[n]}_* (A) \otimes H_*( \THH^{[n]}( S^0[G]);\F_p)$$
converging to $\THH^{[n]}_*(A[G]) $.  (Recall that for a commutative
$\F_p$-algebra $A$, $ \THH^{[n]}(A) $ is an $HA$-module, and so its
homotopy groups are $\F_p$-vector spaces.)  Since the spectral
sequence is concentrated in the $0$th column, it collapses, yielding
$$ \THH^{[n]}_*(A[G]) \cong \THH^{[n]}_* (A) \otimes H_*( \THH^{[n]}(
S^0[G]);\F_p) \cong  \THH^{[n]}_* (A)\otimes \HH^{[n]}_* (\F_p[G]),$$
where the fact that $ H_*( \THH^{[n]}( S^0[G]);\F_p) \cong
\HH^{[n]}_* (\F_p[G])$ follows from the fact that $H_*(S^0[G]; \F_p)$
consists only of $\F_p[G]$ in dimension zero and the K\"unneth formula.
\end{proof}

Note that this proof goes through if we replace $G$ by any
commutative monoid $M$. 

\section{The higher B\"okstedt spectral sequence}

The aim of this section is to provide a B\"okstedt spectral sequence
for $\THH^{[n]}_*$.
\begin{notation} \label{notation:sphere}
For the remainder of the paper $\mathbb{S}^1$ will always denote
the standard model of the $1$-sphere  with two non-degenerate simplices,
one in dimension zero and one in dimension one.
For $n\geq 1$ we
take the $n$-fold smash product of this model as a simplicial model of
$\mathbb{S}^n$.
\end{notation}

Assume that $R$ is a cofibrant commutative $S$-algebra (in the setting of
\cite{EKMM}). Then the simplicial
spectrum $\THH^{[n]}(R)_\bullet$ has  $k$-simplices
$$\THH^{[n]}(R)_k = \bigwedge_{\mathbb{S}^n_k} R.$$

The inclusion from the `subspectrum' of degenerate
simplices into the simplicial spectrum  (which is actually a map of
co-ends, as in 
\cite[p.182]{EKMM}) is a cofibration, because  the degeneracies are
induced by the 
unit of the algebra and the fact that $R$ is cofibrant as a commutative
$S$-algebra \cite[VII Theorem
 6.7]{EKMM} guarantees that the smash product
has the correct homotopy type. Therefore the simplicial spectrum
$\THH^{[n]}(R)_\bullet$ is proper.  

By \cite[X 2.9]{EKMM} properness implies that there is a spectral
sequence for any homology theory $E$ with
$$E^2_{r,s} = H_r(E_s(\THH^{[n]}(R)_\bullet))$$
converging to $E_{r+s}\THH^{[n]}(R)$. 
Note that for every $s$, $E_s(\THH^{[n]}(R)_\bullet)$ is a simplicial
abelian group; 
$H_r(E_s(\THH^{[n]}(R)_\bullet))$ denotes its $r$'th homology group. 

In the following we identify the $E^2$-term in good cases. 

If $E_*(R)$ is flat over $E_*$, then we get that $E_s(\THH^{[n]}(R)_r)$
is
$$\pi_s(E\wedge_S \THH^{[n]}(R)_r) \cong \pi_s(E
\wedge \bigwedge_{\mathbb{S}^n_r} R) \cong
\pi_s(\bigwedge^E_{\mathbb{S}^n_r} E \wedge R) \cong
(\bigotimes^{E_*}_{\mathbb{S}^n_r} E_*(R))_s$$
where $\bigwedge^E$ indicates that the smash product is taken over $E$. Taking
the $r$th homology of the corresponding chain complex gives precisely
$$E^2_{r,s} \cong \HH^{[n]}_{r,s}(E_*(R))$$
 where $r$ is the homological degree and $s$ the internal one. 
Therefore the B\"okstedt spectral sequence for higher $\THH$ is of the
following form.  

\begin{prop}
Let $R$ be a cofibrant commutative $S$-algebra and let $E$ be a
homology theory such that $E_*(R)$ is flat over $E_*$. Then there is a
spectral sequence 
$$E^2_{r,s} \cong \HH^{[n]}_{r,s}(E_*(R)) \Rightarrow E_{r+s}(\THH^{[n]}(R)). $$
\end{prop}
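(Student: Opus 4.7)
The plan is to assemble the pieces already laid out in the preceding discussion into a clean argument with two main steps: (a) produce a spectral sequence converging to $E_*\THH^{[n]}(R)$ from a general homological-algebra machine, and (b) identify its $E^2$-term with higher Hochschild homology under the flatness hypothesis.

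For step (a), I would first note that properness of the simplicial spectrum $\THH^{[n]}(R)_\bullet$ has already been verified: the degeneracies are induced from the unit of $R$, cofibrancy of $R$ as a commutative $S$-algebra together with \cite[VII 6.7]{EKMM} ensures the iterated smash products $\bigwedge_{\mathbb{S}^n_k}R$ have the correct homotopy type, and the inclusion of the degenerate subspectrum is a cofibration. Once properness is in hand, \cite[X 2.9]{EKMM} immediately supplies, for any homology theory $E$, a strongly convergent spectral sequence
\[E^2_{r,s} = H_r\bigl(E_s(\THH^{[n]}(R)_\bullet)\bigr) \Longrightarrow E_{r+s}\THH^{[n]}(R),\]
where the $E^2$-term is the homology of the simplicial abelian group $[k]\mapsto E_s(\THH^{[n]}(R)_k)$.

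For step (b), I would compute $E_s$ in each simplicial degree using the flatness hypothesis. Since $E_*(R)$ is flat over $E_*$, an iterated Künneth argument identifies
\[E_s(\THH^{[n]}(R)_k) \;=\; \pi_s\Bigl(E \wedge \bigwedge_{\mathbb{S}^n_k} R\Bigr) \;\cong\; \Bigl(\bigotimes_{\mathbb{S}^n_k}^{E_*} E_*(R)\Bigr)_s,\]
with the simplicial structure induced by that of $\mathbb{S}^n_\bullet$. But this is precisely the Loday construction $\mathcal{L}(E_*(R))$ evaluated on $\mathbb{S}^n$, whose homotopy groups compute $\HH^{[n]}_{r,s}(E_*(R))$ by definition. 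Taking $H_r$ of this simplicial abelian group therefore yields the desired identification
\[E^2_{r,s} \cong \HH^{[n]}_{r,s}(E_*(R)).\]

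The conceptual content is light: essentially everything needed has been stated or cited in the paragraphs preceding the proposition. The only step that requires any care is the Künneth identification in simplicial degree $k$, where one must check that flatness of $E_*(R)$ propagates through the iterated smash product $\bigwedge_{\mathbb{S}^n_k} R$ and that the resulting simplicial abelian group matches the Loday construction on the simplicial set $\mathbb{S}^n_\bullet$; this is standard once flatness gives a Künneth isomorphism at each stage. There is no obstacle of substance, and the proof is essentially a bookkeeping consolidation of the material already developed.
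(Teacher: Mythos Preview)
Your proposal is correct and follows essentially the same argument as the paper: properness of $\THH^{[n]}(R)_\bullet$ via \cite[VII 6.7]{EKMM} feeds into the spectral sequence of \cite[X 2.9]{EKMM}, and flatness of $E_*(R)$ over $E_*$ gives the K\"unneth identification of each simplicial level with the Loday construction, so that the $E^2$-term is $\HH^{[n]}_{r,s}(E_*(R))$. The paper spells out exactly this reasoning in the paragraphs preceding the proposition, and your consolidation of it is accurate.
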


For $E=H\mathbb{F}_p$ we get $\HH^{[n]}_{r,s}((H\F_p)_*(R))$ for instance. If
we then set $R=H\F_p$ as well, we obtain

$$E^2_{r,s} \cong \HH^{[n]}_{r,s}((H\F_p)_*(H\F_p))$$
thus we have to calculate Hochschild homology of order $n$ of the dual
of the mod-$p$ Steenrod algebra, $\mathcal{A}_*(p)$.

For $p=2$ this is a polynomial algebra in classes $\xi_i$ of degree
$2^i-1$ and for $i \geq 1$. We can write $\mathcal{A}_*(2)$ as
$$ \mathcal{A}_*(2) \cong \bigotimes_{i\geq 1} \F_2[\xi_i].$$

Recall that Pirashvili defines Hochschild homology of order $n$ of a
commutative $k$-algebra $A$
the homotopy groups of the Loday functor $\mathcal{L}(A,A)$ evaluated on a
simplicial model of $\mathbb{S}^n$ \cite[5.1]{P}. For a finite pointed
set of the form $\{0,\ldots,m\}$ with $0$ as basepoint
$\mathcal{L}(A,A)\{0,\ldots,m\}$ is $A^{\otimes m+1}$ and a map of
finite pointed sets $f\colon \{0,\ldots,m\} \ra \{0,\ldots,M\}$ induces
    a map of tensor powers by
$$ f_*(a_0\otimes \ldots\otimes a_m) = b_0\otimes \ldots\otimes b_M,
\, b_i=\prod_{f(j)=i}a_j$$
where the product over the empty set spits out the unit of the algebra
$A$. For a finite pointed simplicial set $X.$ the Loday functor on $X.$
is then defined to be the simplicial $k$-module with $m$-simplices
$$\mathcal{L}(A,A)(X.)_m = \mathcal{L}(A,A)(X_m).$$
Therefore, for any  two commutative
algebras $A, B$ we have
$$ \mathcal{L}(A\otimes B,A\otimes B) \cong  \mathcal{L}(A,A) \otimes
\mathcal{L}(B,B)$$
as functors and so
$$ \pi_*\mathcal{L}(A\otimes B,A\otimes B)(\mathbb{S}^n) \cong
\pi_*(\mathcal{L}(A,A)(\mathbb{S}^n) \otimes
 \mathcal{L}(B,B)(\mathbb{S}^n)).$$
If all the algebras involved are flat as $k$-modules, we can identify this with
$$ \pi_*(\mathcal{L}(A,A)(\mathbb{S}^n)) \otimes
\pi_*(\mathcal{L}(B,B)(\mathbb{S}^n)).$$

In our case, where we are working over $\fp$, we can therefore break
down B\"okstedt's spectral sequence
$\HH^{[n]}_{r,s}(\mathcal{A}_*(p))$ into a tensor product of the
higher Hochschild homology of the different tensored factors of 
$\mathcal{A}_*(p)$.

\smallskip

We know that
$$ \HH^{[n]}_*(k[x];k) \cong H_*(K(\mathbb{Z},n);k)$$
(see
for instance \cite[p.~207]{lr}).
Here $\HH^{[n]}_*(k[x];k)$ denotes Hochschild homology of order $n$ of
$k[x]$ with coefficients in $k$. So we
have to understand what difference an internal grading makes and what
changes if we take coefficients in $k[x]$ and not just in $k$.

\section{Higher Hochschild homology of (truncated) polynomial
  algebras}\label{sec:hhh} 
In this section we will explain how to compute the higher Hochschild
homology of the rings $k[x]$  over any integral domain $k$, and
$\fpxl$ over $\fp$.   By varying the ground ring over which the tensor
products in the Loday construction are taken, we can exhibit higher
Hochschild homology as iterated Hochschild homology.  Because we will
be varying the ground rings, we introduce the notation
$\mathcal{L}^k (R,M)$ to indicate the ground ring $k$ in the Loday
construction.

These methods were suggested to us by Michael Mandell based on his
work with Maria Basterra on  $\TAQ$ computations. Note that most of
this section involves formal constructions that 
could be applied to augmented commutative $H\fp$-algebra spectra as
well. 

\begin{lem} \label{lem:twoRs}
Let $k$ be a commutative ring, and let $R$ be a commutative
$k$-algebra.  Then there is an isomorphism of functors from pointed
simplicial sets to simplicial augmented  commutative  $R$-algebras
$$\mathcal{L}^k (R,R)\cong \mathcal{L}^R (R\otimes_k R,R),$$
where $R$ acts on  $R\otimes_k R$ by multiplying the first coordinate,
and the augmentation map is the multiplication $R\otimes_k R\to R$.
\end{lem}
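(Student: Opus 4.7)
The plan is to construct the isomorphism explicitly at the level of each pointed finite set $Y$ (which is all that's needed, since both Loday constructions are defined by evaluation on $X_n$ and then extended simplicially), and then verify naturality in $Y$, which automatically yields compatibility with the simplicial structure on $X$.

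For a pointed finite set $Y$ with $m$ non-basepoint elements, I would unravel
\[
\mathcal{L}^R(R\otimes_k R, R)(Y) \;=\; R \otimes_R \bigotimes_{y \in Y \setminus \{*\}} (R \otimes_k R),
\]
where the tensor products on the right are over $R$. The key algebraic identity is that, since $R \otimes_k R$ is an $R$-algebra via the left factor (so its $R$-action is on the left), tensoring $m$ copies of it over $R$ pools all left factors into a single copy of $R$; tensoring the basepoint $R$ in then absorbs this pooled factor via $R \otimes_R (R \otimes_k M) \cong R \otimes_k M$, leaving $R^{\otimes_k(m+1)} = \mathcal{L}^k(R,R)(Y)$. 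I would make this concrete with the map
\[
\Phi_Y \colon a_* \otimes_R \bigotimes_{y \neq *}(r_y \otimes s_y) \;\longmapsto\; \Bigl(a_* \prod_{y \neq *} r_y\Bigr) \otimes \bigotimes_{y \neq *} s_y,
\]
where the first factor on the target sits at the basepoint position, with inverse sending $\bigotimes_{y \in Y} a_y$ to $a_* \otimes_R \bigotimes_{y \neq *}(1 \otimes a_y)$. Well-definedness over the $\otimes_R$-balancing relations is immediate: commuting any $r \in R$ across a $\otimes_R$ sign merely changes which factor $r$ is multiplied into inside the pooled product, and the answer is unchanged by commutativity of $R$.

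Next I would verify naturality with respect to pointed maps $f\colon Y \to Z$. Both Loday constructions are functorial by multiplying entries over each fibre of $f$, with the basepoint fibre using the algebra-on-coefficients action, which here is the augmentation $R\otimes_k R \xrightarrow{\mathrm{mult}} R$. Because multiplication in $R \otimes_k R$ is componentwise, one has $\prod_{y \in f^{-1}(z)}(r_y \otimes s_y) = (\prod r_y) \otimes (\prod s_y)$, while at the basepoint the augmentation contributes $\prod_{y \in f^{-1}(*)\setminus \{*\}} r_y s_y$ into the pooled basepoint factor, matching exactly the corresponding product on the $\mathcal{L}^k(R,R)$ side. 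Multiplicativity of $\Phi_Y$ and compatibility with the augmentations induced by $X \to *$ then follow from the same componentwise description.

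The main obstacle is purely clerical rather than conceptual: tracking which copies of $R$ become identified under each $\otimes_R$-balancing relation, and verifying the naturality square for pointed maps whose fibres meet the basepoint, where the augmentation of $R \otimes_k R$ is activated. At heart the whole lemma is built on the single-variable identity $R \otimes_R (R \otimes_k R) \cong R \otimes_k R$, iterated once per non-basepoint element of $Y$.
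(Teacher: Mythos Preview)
Your proof is correct and is essentially the same as the paper's: the paper simply writes down the map $\mathcal{L}^k(R,R)\to\mathcal{L}^R(R\otimes_k R,R)$ given by $r\mapsto 1\otimes r$ on each non-basepoint coordinate and the identity on the basepoint, which is precisely the map you call the inverse of $\Phi_Y$, and then asserts without further detail that it is simplicial and an isomorphism in each degree. You have merely supplied the explicit inverse and the routine verifications of well-definedness, naturality, multiplicativity, and augmentation compatibility that the paper leaves implicit.
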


\begin{proof}
We can define a natural transformation $\mathcal{L}^k (R,R)\to
\mathcal{L}^R (R\otimes_k R,R)$ by 
mapping $R\hookrightarrow R\otimes_k R$ via $r\mapsto 1\otimes r$ over
each simplex other than the base point, and using the identity over
the base point.  This map is simplicial, and is an isomorphism in each
simplicial degree.
\end{proof}

\begin{rem}\label{rem:HHandbar}
For any commutative ring $R$ and augmented commutative  $R$-algebra
$C$, there is an isomorphism of simplicial augmented commutative
$R$-algebras
$$\B^R(R,C,R)  \cong \mathcal{L}^R (C,R)(\mathbb{S}^1) ,$$
where $\B^R$ denotes the two-sided bar construction with tensors taken
over $R$ and $\mathbb{S}^1$ is the model of the $1$-sphere as in
\ref{notation:sphere}. This is simply because we can map the two $R$'s
on the sides of the
bar complex to the $0$th (coefficient) coordinate in the Hochschild
homology complex.
\end{rem}

\begin{lem} \label{lem:smashingspaces}
Let $R$ be a commutative ring, and let $C$ be an augmented commutative 
$R$-algebra.  Let $X.$ and $Y.$ be pointed simplicial sets.  Then
there is an isomorphism between the diagonals of the bisimplicial
augmented commutative $R$-algebras
$$\mathcal{L}^R (\mathcal{L}^R(C,R)(X.), R)(Y.) \cong \mathcal{L}^R
(C,R)(X.\wedge Y.)$$
\end{lem}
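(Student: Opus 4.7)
The plan is to exhibit a canonical degreewise isomorphism and then verify compatibility with the simplicial operators using functoriality of the Loday construction in its pointed-set variable. First I would compute both sides in each (bi)simplicial degree. On the right,
\[
\mathcal{L}^R(C,R)(X.\wedge Y.)_n = \bigotimes_R^{(X_n\wedge Y_n)\setminus *} C,
\]
and since $(X_n\wedge Y_n)\setminus * = (X_n\setminus *)\times (Y_n\setminus *)$, this is a tensor product of copies of $C$ indexed by the product set. On the left, in bisimplicial bidegree $(m,n)$,
\[
\mathcal{L}^R(\mathcal{L}^R(C,R)(X.)_m, R)(Y.)_n = \bigotimes_R^{y\in Y_n\setminus *}\bigotimes_R^{x\in X_m\setminus *} C,
\]
which on the diagonal $m=n$ is the same commutative augmented $R$-algebra as the right-hand side, up to the obvious reordering isomorphism $\phi_n$ regrouping tensor factors.

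Next I would upgrade this to a natural isomorphism. Regarding $\mathcal{L}^R(C,R)$ as a functor from finite pointed sets to augmented commutative $R$-algebras, the claim becomes that the two functors of pairs of finite pointed sets, namely
\[
(S,T)\mapsto \mathcal{L}^R(\mathcal{L}^R(C,R)(S),R)(T) \mathand (S,T)\mapsto \mathcal{L}^R(C,R)(S\wedge T),
\]
are naturally isomorphic via $\phi$. This reduces to checking behavior on pointed maps: a pointed map $f:S\to S'$ induces, on the tensor factor indexed by $s$, either multiplication into other factors with the same image (when $f(s)\neq *$) or the augmentation $C\to R$ (when $f(s)=*$), and this is the same operation whether viewed through the inner Loday on $S$ or through $f\wedge \id_T:S\wedge T\to S'\wedge T$, because the latter sends $(s,t)$ to the basepoint exactly when $f(s)=*$. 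Maps on the $T$-variable are handled symmetrically.

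Finally, the diagonal of the bisimplicial object on the left uses the simplicial operators of $X.$ acting on the inner Loday composed with those of $Y.$ acting on the outer Loday, while on the right the $n$th simplicial operator is the one induced by $d_i\wedge d_i$ (respectively $s_i\wedge s_i$) on $X_n\wedge Y_n$. Since smash acts coordinatewise on pointed sets, naturality of $\phi$ in both variables identifies these, yielding the claimed isomorphism of simplicial augmented commutative $R$-algebras. The main subtlety — and the main thing to check carefully — is the interaction with the augmentation through the two layers of the Loday construction: when a face sends some $x\in X_n$ to the basepoint, the inner Loday applies $C\to R$ on the corresponding factor and the outer Loday treats the resulting unit $1\in R$ inertly, which matches exactly what happens on the right, where for each non-basepoint $y$ the pair $(x,y)$ is sent to the basepoint of $X_{n-1}\wedge Y_{n-1}$ and the augmentation is applied directly in the single Loday construction.
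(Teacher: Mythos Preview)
Your proof is correct and follows the same approach as the paper: identify both sides degreewise as $\bigotimes_{\tilde X_n\times\tilde Y_n} C$ (using that the non-basepoint simplices of $X_n\wedge Y_n$ are $\tilde X_n\times\tilde Y_n$) and then check that the simplicial operators agree. The paper's proof is a two-line version of yours, simply asserting the simplicial compatibility that you verify carefully via naturality of the Loday functor in the pointed-set variable; your added discussion of the augmentation is exactly the content hidden in the paper's phrase ``induced \ldots\ in the same way.''
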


If $X.$ is a pointed simplicial set, then we denote
by $\tilde{X}_k$ the $k$-simplices of $X$ that are not the basepoint.

\begin{proof}
In degree $k$ we can identify  the diagonal of the bisimplicial sets as
$${\bigotimes_{\tilde Y_k}} ((\bigotimes_{\tilde X_k} C)\otimes R)
\otimes R \cong \bigotimes_{\tilde X_k\times \tilde Y_k} C \otimes R.$$
Here, tensor products are all taken over $R$. The 
non-basepoint $k$-simplices
in $X.\wedge Y.$ are exactly $\tilde X_k\times \tilde Y_k$, and the
simplicial face maps in both cases are induced from those of $X.$ and
$Y.$  in the same way.
\end{proof}

\begin{cor}\label{cor:iterated}
For any commutative ground ring $k$ and commutative $k$-algebra $R$,
the $n$th higher Hochschild homology complex of $R$ over $k$,
$\HH^{[n]}(R)$, 
can be written as
$$\HH^{[n]}(R)\cong \B^R(R,\HH^{[n-1]}(R),R).$$
\end{cor}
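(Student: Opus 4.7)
The plan is to chain together the three preceding results: Lemma \ref{lem:twoRs}, Remark \ref{rem:HHandbar}, and Lemma \ref{lem:smashingspaces}, with the simplicial model $\mathbb{S}^n \cong \mathbb{S}^{n-1} \wedge \mathbb{S}^1$ from Notation \ref{notation:sphere} as the bridge.

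First I would unwind the left-hand side: by definition of higher Hochschild homology,
\[\HH^{[n]}(R) \cong \mathcal{L}^k(R,R)(\mathbb{S}^n),\]
and then Lemma \ref{lem:twoRs} lets me replace the $k$-ground-ring Loday construction with an $R$-ground-ring one applied to the augmented commutative $R$-algebra $C := R \otimes_k R$, yielding
\[\mathcal{L}^k(R,R)(\mathbb{S}^n) \cong \mathcal{L}^R(C,R)(\mathbb{S}^n).\]

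Next I would exploit the smash product decomposition $\mathbb{S}^n \cong \mathbb{S}^{n-1} \wedge \mathbb{S}^1$ coming from Notation \ref{notation:sphere}. Applying Lemma \ref{lem:smashingspaces} with $X. = \mathbb{S}^{n-1}$ and $Y. = \mathbb{S}^1$, we get an isomorphism
\[\mathcal{L}^R(C,R)(\mathbb{S}^{n-1} \wedge \mathbb{S}^1) \cong \mathcal{L}^R\bigl(\mathcal{L}^R(C,R)(\mathbb{S}^{n-1}),\, R\bigr)(\mathbb{S}^1).\]
Inside the outer Loday construction, Lemma \ref{lem:twoRs} (read backwards) identifies $\mathcal{L}^R(C,R)(\mathbb{S}^{n-1}) \cong \mathcal{L}^k(R,R)(\mathbb{S}^{n-1}) = \HH^{[n-1]}(R)$. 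Finally, Remark \ref{rem:HHandbar} applied to the augmented commutative $R$-algebra $\HH^{[n-1]}(R)$ (which is augmented via the canonical map to $R$ arising from the basepoint) gives
\[\mathcal{L}^R\bigl(\HH^{[n-1]}(R),R\bigr)(\mathbb{S}^1) \cong \B^R\bigl(R, \HH^{[n-1]}(R), R\bigr),\]
which is exactly the right-hand side of the claim.

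The main point to be careful about is bookkeeping the augmented commutative $R$-algebra structures at each stage so that the hypotheses of Lemma \ref{lem:smashingspaces} and Remark \ref{rem:HHandbar} actually apply. Specifically, $\mathcal{L}^R(C,R)(\mathbb{S}^{n-1})$ should be treated as a simplicial augmented commutative $R$-algebra, where the $R$-algebra structure comes from the basepoint coordinate (as in the proof of Lemma \ref{lem:twoRs}) and the augmentation comes from collapsing $\mathbb{S}^{n-1}$ to a point. Once these structures are pinned down, each step above is a straight application of a previously established natural isomorphism, and no further calculation is needed.
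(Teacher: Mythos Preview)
Your proposal is correct and follows essentially the same route as the paper: the paper's proof is precisely the chain
\[\HH^{[n]}(R) = \mathcal{L}^k(R,R)(\mathbb{S}^n) \cong \mathcal{L}^R(R\otimes_k R,R)(\mathbb{S}^n) \cong \mathcal{L}^R(\mathcal{L}^R(R\otimes_k R,R)(\mathbb{S}^{n-1}),R)(\mathbb{S}^1) \cong \mathcal{L}^R(\HH^{[n-1]}(R),R)(\mathbb{S}^1) \cong \B^R(R,\HH^{[n-1]}(R),R),\]
invoking Lemma~\ref{lem:twoRs}, Lemma~\ref{lem:smashingspaces}, Lemma~\ref{lem:twoRs} again (in reverse), and Remark~\ref{rem:HHandbar}, exactly as you outline.
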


\begin{proof}
By Lemmata \ref{lem:twoRs} and \ref{lem:smashingspaces} and Remark
\ref{rem:HHandbar},
\begin{equation*}
\begin{split}
\HH^{[n]}(R) & = \mathcal{L}^k (R,R)(\mathbb{S}^n)  \cong  \mathcal{L}^R
(R\otimes_k R,R) (\mathbb{S}^n)\cong
\mathcal{L}^R (\mathcal{L}^R(R\otimes R,R)(\mathbb{S}^{n-1}), R)(\mathbb{S}^1)
\\ &
\cong \mathcal{L}^R (\HH^{[n-1]}(R), R)(\mathbb{S}^1)\cong
\B^R(R,\HH^{[n-1]}(R),R).
\end{split}
\end{equation*}
\end{proof}

\begin{rem}
Our results in Corollary \ref{cor:iterated} are not new.  They can be
found in the literature for slightly different settings: For instance,
Veen \cite{V} establishes such an
identification for ring spectra and the \cite{bcd}-model in order to
construct his spectral sequence and Ginot-Tradler-Zeinalian prove in
an $(\infty,1)$-category setting that
the Hochschild functor sends homotopy pushouts on space level to derived tensor
products \cite[3.27 c)]{gtz}.
\end{rem}
Now we can calculate $\HH^{[n]}(R) $ inductively.  To work with the
bar construction, observe first that if we calculate
$\B^R(R,C,R)$ for an augmented commutative  $R$-algebra $C$ and if there
is an augmented commutative  $k$-algebra $C'$ so that $C\cong R \otimes
C'$ as an augmented commutative $R\otimes k$-algebra (that is, the augmentation
$C\to R$ is the tensor product of the identity of $R$ with an
augmentation $C'\to k$), then by grouping the $R$'s together we get

$$\B^R(R,C,R)\cong \B^R(R,R,R) \otimes \B^k(k,C',k)\cong R  \otimes
\B^k(k,C',k)$$
as simplicial augmented commutative  $R\cong R\otimes k$-algebras.  Also, if we
have a tensor product of augmented commutative $k$-algebras $C$ and
$D$,
$$\B^k(k, C\otimes D, k)\cong \B^k(k, C, k) \otimes \B^k(k, D, k)$$
as simplicial augmented commutative  $k$-algebras.

\bigskip
In  \cite{B},  B\"okstedt used such decompositions to calculate the
Hochschild homology of the dual of the Steenrod algebra.   He observed
that for any commutative ring $k$,
$$k[x]\otimes k[x]\cong k[x]\otimes C',$$
as augmented commutative  algebras, where $k[x]$ is embedded as
 $k[x]\otimes k \subset k[x]\otimes k[x]$, and $C'\subset k[x]\otimes
k[x]$ is the sub-algebra generated over $k$ by the element
$x'=x\otimes 1 -1\otimes x$.  Note that $C'= k[x']\cong k[x]$.

\begin{thm}\label{thm:kofx}
Let $k$ be an integral domain.
There is an isomorphism of  simplicial augmented commutative
$k$-algebras
$$\HH^{[n]}(k[x])\cong k[x]\otimes
\underbrace  {
\B(k, \B(k,\cdots \B(k}
_{n \rm\ times}
,k[x],k)\cdots ,k) ,k)$$
where we take the diagonal of the multisimplicial set on the
right. This induces an isomorphism of the associated chain complexes.

Moreover, there is a map of augmented differential graded $k$-algebras
which is a quasi-iso\-morphism on the associated chain complexes
$$\HH^{[n]}(k[x])\cong k[x]\otimes
\underbrace  {
 \Tor^{\Tor^{\cdots^{\Tor
 ^{k[x]}(k,k)}\cdots}(k,k)}(k,k)
 }
_{n \rm\ times}
 \cong k[x]\otimes B'_{n+1},$$
for $B'_{n+1}$ from Definition \ref{defn:Bnprime}.
\end{thm}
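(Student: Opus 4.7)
\emph{Plan.}  The first assertion is proved by induction on $n$, combining Corollary \ref{cor:iterated} (which gives $\HH^{[n]}(k[x]) \cong \B^{k[x]}(k[x], \HH^{[n-1]}(k[x]), k[x])$) with two simple principles noted in the text: (i) if $C$ is an augmented commutative $R$-algebra of the form $R \otimes_k C'$ with augmentation $\id_R \otimes \varepsilon'$, then $\B^R(R, C, R) \cong R \otimes_k \B^k(k, C', k)$; and (ii) B\"okstedt's decomposition $k[x] \otimes_k k[x] \cong k[x] \otimes_k k[x']$ with $x' = x \otimes 1 - 1 \otimes x$, which presents $\HH^{[0]}(k[x]) = k[x] \otimes_k k[x]$ in precisely the form needed for (i), with $k[x']$ augmented by sending $x'$ to zero.

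For $n=1$ one applies Corollary \ref{cor:iterated}, then (ii), then (i), to obtain $\HH^{[1]}(k[x]) \cong k[x] \otimes_k \B(k, k[x], k)$. For the inductive step, assume $\HH^{[n-1]}(k[x]) \cong k[x] \otimes_k C_{n-1}$ as augmented commutative $k[x]$-algebras, where $C_{n-1}$ denotes the $(n-1)$-fold iterated bar construction on $k[x]$; the $k[x]$-algebra structure comes from the basepoint factor and the augmentation on $C_{n-1}$ is the iterated bar augmentation. Applying Corollary \ref{cor:iterated} and (i) once more gives $\HH^{[n]}(k[x]) \cong k[x] \otimes_k \B^k(k, C_{n-1}, k) = k[x] \otimes_k C_n$, establishing the first isomorphism.

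The quasi-isomorphism to the iterated Tor is then obtained by applying the dga quasi-isomorphisms of Section \ref{sec:lemmas} from the inside out. Proposition \ref{O} first replaces $\B(k, k[x], k)$ by $\Lambda(\epsilon x)$ as augmented commutative dgas; the explicit splittings $\pi \circ \inc = \id$, together with $k$-freeness of all algebras in sight, ensure that applying $\B(k, -, k)$ again preserves the quasi-isomorphism, and Proposition \ref{truncatedodd} then identifies $\B(k, \Lambda(\epsilon x), k)$ with $\Gamma(\rho^0 \epsilon x)$. Iterating in this way, one replaces the current dga at each stage by its homology. Over $\F_p$ we use the decomposition $\Gamma(y) \cong \bigotimes_{i \geq 0} \F_p[\gamma_{p^i}(y)]/(\gamma_{p^i}(y))^p$ to apply Proposition \ref{truncatedeven} factor by factor, which reproduces exactly the flowchart structure of Section \ref{sec:Veen} with $\omega = x$ and exhibits the final iterated Tor as $B'_{n+1}$ of Definition \ref{defn:Bnprime}. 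The main subtlety is checking that $\B(k, -, k)$ preserves each of the quasi-isomorphisms of Section \ref{sec:lemmas}; this is not automatic for arbitrary dga quasi-isomorphisms, but here it follows from the explicit form of $\pi$ and $\inc$ as split dga maps (so that $\B(\pi) \circ \B(\inc) = \id$) combined with a standard argument comparing the associated graded of the bar filtration on both sides, which depends only on the input algebras as $k$-modules.
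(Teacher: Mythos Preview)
Your proposal is correct and follows essentially the same route as the paper: the inductive use of Corollary~\ref{cor:iterated} together with B\"okstedt's splitting $k[x]\otimes k[x]\cong k[x]\otimes k[x']$ for the first part, and the iterated application of the multiplicative quasi-isomorphisms of Section~\ref{sec:lemmas} for the second. You are actually a bit more explicit than the paper about why applying $\B(k,-,k)$ preserves the quasi-isomorphisms of Section~\ref{sec:lemmas}; the paper simply asserts this, whereas you point to $k$-freeness and the bar-filtration comparison, which is the right justification (the retraction $\pi\circ\inc=\id$ alone would not suffice).
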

Here the $\Tor$-expressions
and $B'_{n+1}$ are viewed as differential graded $k$-algebras with
respect to the trivial differential; thus it follows automatically
that the higher Hochschild homology groups of $k[x]$ are,
respectively, isomorphic to the part of them which has the appropriate
degree.
\begin{proof}
The first part of the claim is proved inductively.  From B\"okstedt's
decomposition we get
$$\HH^{[1]}(k[x])\cong
 \B^{k[x]}(k[x], k[x]\otimes C',k[x])\cong  k[x] \otimes \B (k, C',
k) \cong  k[x] \otimes \B(k, k[x],k)$$
 as 
simplicial augmented commutative  $k$-algebras. From this
decomposition and the same kind of splitting, we then get by Corollary
\ref{cor:iterated} that
$\HH^{[2]}(k[x])\cong k[x] \otimes \B(k,\B(k,k[x],k), k)$, and the
general statement follows by an iteration of this argument.

The second part uses the quasi-isomorphisms of differential graded
algebras from Section \ref{sec:lemmas}.  The point is that we have a
multiplicative quasi-isomorphism $\B(k, k[x], k)\simeq
\Lambda(\epsilon x) $, which means that we have  multiplicative
quasi-isomorphisms
$\B(k,\B(k,k[x],k), k) \simeq \B(k, \Lambda(\epsilon x),k) \simeq
\Gamma(\rho^0\epsilon x)$, and so on.  Thus instead of having a Veen-type
spectral sequence, which one can easily get for Hochschild homology
following the method that Veen used for topological Hochschild
homology, we have a complex of algebras.
\end{proof}

\begin{rem}
As mentioned before, we believe that an argument along the lines of
the above proof can show that Veen's spectral sequence collapses at
$E^2$ for certain commutative ring spectra. To this end one has to
establish that the higher topological Hochschild homology bar
constructions of these ring spectra are weakly equivalent via
multiplicative maps to the homotopy rings of the bar construction (taken
over the Eilenberg Mac Lane spectrum of $\fp$ rather than  over
$\fp$). Such an argument would be analogous to our proof that there
are multiplicative quasi-isomorphisms between
the bar constructions $\B(k,A,k)$ (for certain algebras $A$) and their
homology algebras as in Section \ref{sec:lemmas}.
\end{rem}

In low dimensions we can identify $\HH^{[n]}(\F_p[x])$ as
follows:  We know that Hochschild homology of $\F_p[x]$,
$\HH_*(\fp[x])$ is isomorphic to
$\Lambda_{\fp[x]}(\epsilon x)$ with $|\epsilon x|=1$. For Hochschild
homology of order two we obtain
$$\HH^{[2]}_*(\fp[x])\cong \G_{\F_p[x]}(\rho^0\epsilon x), \,
|\rho^0\epsilon x|= 2.$$
In the next step we get
$$\HH^{[3]}_*(\fp[x])\cong \Tor_{*,*}^{\G_{\F_p[x]}(\rho^0\epsilon
  x)}(\fp[x], \fp[x]) \cong
\left(\bigotimes_{k\geq 0} \Lambda_{\F_p[x]}(\epsilon \rho^k \epsilon
x)\right) \otimes \left(\bigotimes_{k\geq
 0} \G_{\F_p[x]}(\varphi^0 \rho^k \epsilon x)\right).$$
Using the flowcharts in Figure \ref{figure1} and Figure \ref{figure2} one can
explicitly calculate Hochschild homology of higher order.

\bigskip
 Specifying $k=\fp$ and using B\"okstedt's method again, if we
 consider the ring $\fpxl$ we obtain
$$\fpxl \otimes \fpxl \cong \fpxl \otimes C'',$$
as augmented commutative algebras, where $\fpxl$ is embedded as $\fpxl
\otimes k \subset\fpxl \otimes \fpxl$, and $C''\subset \fpxl\otimes
\fpxl$ is the $\F_p$-sub-algebra generated by the element
$x'=x\otimes 1 -1\otimes x$, with the relation $(x')^\pl =0$ so that
again  $C''= \fp  [x']/(x')^\pl  \cong \fpxl$.

\smallskip
We use this to get  a
calculation of the higher Hochschild homology groups of $\fpxl$.  In
\cite{P}, Pirashvili  calculated the $n$th higher Hochschild homology
groups of $k[x]/x^a$ for any $a$ when $n$  is odd and $k$ is a field
of characteristic zero using Hodge decomposition techniques. 

\begin{thm}\label{thm:kofxmodpl}
There is an isomorphism of  simplicial augmented commutative 
$\fp$-algebras
$$\HH^{[n]}(\fpxl)\cong \fpxl\otimes
\underbrace{\B(\fp, \B(\fp,\cdots \B(\fp}_{n \rm\ times} ,\fpxl,\fp
)\cdots ,\fp) ,\fp)$$
where we take the diagonal of the multisimplicial set on the
right. This induces an isomorphism of the associated chain complexes.

Moreover, there is a map of augmented differential graded
$\fp$-algebras which is a quasi-isomorphism on the associated chain
complexes
$$\HH^{[n]}(\fpxl)\cong \fpxl\otimes
\underbrace  {
 \Tor^{\Tor^{\cdots^{\Tor
 ^{\fpxl}(\fp,\fp)}\cdots}(\fp,\fp)}(\fp,\fp)
}
_{n \rm\ times}
 \cong \fpxl\otimes B''_{n+1},$$
for $B''_{n+1}$ from Definition \ref{defn:Bnprime}.  The $\Tor$-expressions
and $B''_{n+1}$ are again viewed as differential graded $\fp$-algebras
with a trivial differential.
\end{thm}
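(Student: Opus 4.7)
The plan is to mirror the proof of Theorem \ref{thm:kofx}, substituting B\"okstedt's decomposition of $\fpxl\otimes\fpxl$ in place of the one for $k[x]\otimes k[x]$. First I would establish the simplicial decomposition by induction on $n$. The base case $n=1$ uses the isomorphism $\fpxl\otimes\fpxl\cong\fpxl\otimes C''$ recalled just before the theorem, which by Lemma \ref{lem:twoRs} and Remark \ref{rem:HHandbar} yields
\[
\HH^{[1]}(\fpxl)\cong \B^{\fpxl}(\fpxl,\fpxl\otimes C'',\fpxl)\cong \fpxl\otimes \B(\fp,C'',\fp)\cong \fpxl\otimes \B(\fp,\fpxl,\fp)
\]
as simplicial augmented commutative $\fp$-algebras. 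For the inductive step, Corollary \ref{cor:iterated} lets me write $\HH^{[n]}(\fpxl)\cong \B^{\fpxl}(\fpxl,\HH^{[n-1]}(\fpxl),\fpxl)$, and the already-established splitting on $\HH^{[n-1]}(\fpxl)$ together with the tensor-product rule for bar constructions (grouping the outer $\fpxl$-factors together and using $\B^{\fpxl}(\fpxl,\fpxl,\fpxl)\simeq\fpxl$) allows me to peel off one more level of $\B(\fp,-,\fp)$.

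For the second (quasi-isomorphism) part, I would carry the quasi-isomorphisms of differential graded algebras from Section \ref{sec:lemmas} through the tower. Starting from $\B(\fp,\fpxl,\fp)$, Proposition \ref{truncatedeven} (applied in the degree-$0$ case, which is still valid since even degree is allowed) produces a multiplicative quasi-isomorphism to $\Lambda(\epsilon x)\otimes \G(\varphi^0 x)$. Then, since the next bar construction $\B(\fp,-,\fp)$ preserves quasi-isomorphisms of augmented cdga's, I can replace the input by this tensor product and split the bar construction over the tensor product to get $\B(\fp,\Lambda(\epsilon x),\fp)\otimes \B(\fp,\G(\varphi^0 x),\fp)$. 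Applying Proposition \ref{truncatedodd} to the exterior factor and (using the $\fp$-decomposition $\G(\varphi^0 x)\cong\bigotimes_i \fp[\varphi^i x]/(\varphi^i x)^p$ recalled in the Notation block) Proposition \ref{truncatedeven} to each truncated polynomial factor gives the next level of the flowchart. Iterating $n$ times gives the iterated-$\Tor$ description, which by Definition \ref{defn:Bnprimeprime} is exactly $B''_{n+1}$.

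The main obstacle I expect is keeping the multiplicative and augmentation structures straight across the iteration. Each replacement is an honest multiplicative quasi-isomorphism of augmented cdga's by the propositions of Section \ref{sec:lemmas}, and the bar construction $\B(\fp,-,\fp)$ is a functor of augmented cdga's that sends multiplicative quasi-isomorphisms to multiplicative quasi-isomorphisms (via the shuffle product) and is symmetric monoidal on tensor products of augmented algebras. These two properties together let the induction go through. Once the dust settles, the bidegree bookkeeping (recorded already in Section \ref{sec:Veen}) ensures that the generators produced at each stage match the generators of $B''_{n+1}$, and the relation $x^m=0$ persists as the initial condition that makes the rightmost letter $x$ rather than $\mu$ and that picks up $\varphi^k$'s adjacent to $x$ (rather than only $\epsilon x$, as in $B'_{n+1}$). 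Since the trivial differential on $B''_{n+1}$ is forced by the multiplicative quasi-isomorphisms we have produced, the stated chain of maps is a quasi-isomorphism of augmented differential graded $\fp$-algebras.
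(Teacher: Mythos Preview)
Your proposal is correct and follows essentially the same approach as the paper, which leaves the proof implicit precisely because it is the obvious adaptation of the proof of Theorem~\ref{thm:kofx}: replace the B\"okstedt decomposition $k[x]\otimes k[x]\cong k[x]\otimes C'$ by $\fpxl\otimes\fpxl\cong\fpxl\otimes C''$ and run the same induction via Corollary~\ref{cor:iterated} and the quasi-isomorphisms of Section~\ref{sec:lemmas}. Your care in noting that Proposition~\ref{truncatedeven} applies in degree~$0$ and that the $\fp$-splitting of $\Gamma(\varphi^0 x)$ feeds the flowchart is exactly what is needed.
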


\section{\'Etale and Galois descent}
Ordinary Hochschild homology satisfies \'etale and Galois descent:
Weibel and Geller \cite{WG} showed that for an \'etale extension $A \ra B$ of
commutative $k$-algebras one has
$$ \HH_*(B) \cong \HH_*(A) \otimes_{A} B$$
and if $A \ra B$ is a Galois extension of commutative $k$-algebras in
the sense of Auslander-Goldman \cite{AG}
with finite Galois group $G$, then
$$ \HH_*(A) \cong \HH_*(B)^G.$$

We will show that these properties translate to higher order
Hochschild homology. In the following let $k$ be again an arbitrary
commutative unital ring and let $n$ be greater or equal to one.

\begin{thm}\label{thm:etale}
  \begin{enumerate}
\item[]
  \item
If $A$ is a commutative \'etale $k$-algebra, then $\HH_*^{[n]}(A)
\cong A$.
\item
If $A \ra B$ is an \'etale extension of commutative $k$-algebras, then
$$ \HH^{[n]}_*(B) \cong \HH^{[n]}_*(A) \otimes_{A} B.$$
\item
If $A \ra B$ is a $G$-Galois extension with $G$ a finite group, then
$$ \HH_*^{[n]}(A) \cong \HH^{[n]}_*(B)^G.$$
  \end{enumerate}
\end{thm}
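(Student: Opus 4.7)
The plan is to prove the three parts in order by induction on $n$, using the iterative formula $\HH^{[n]}(R)\simeq\B^R(R,\HH^{[n-1]}(R),R)$ from Corollary \ref{cor:iterated} together with the classical Weibel--Geller results of \cite{WG} as base cases. For part (a), the case $n=1$ is the Weibel--Geller statement that $\HH_*(A)\cong A$ for \'etale $A/k$. Given inductively that the canonical map $A\to\HH^{[n-1]}(A)$ (induced by the inclusion of the basepoint of $\mathbb{S}^{n-1}$) is a quasi-isomorphism of simplicial commutative $A$-algebras, the iterative formula and the homotopy invariance of the two-sided bar construction give
$$\HH^{[n]}(A)\simeq\B^A(A,\HH^{[n-1]}(A),A)\simeq\B^A(A,A,A)=A,$$
where the last equality is because $\B^A(A,A,A)$ is the constant simplicial $A$-algebra $A$. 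The basepoint map $A\to\HH^{[n]}(A)$ is compatible with the iteration, so the induction closes.

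For part (b), the inductive step applies the iterative formula to $R=B$ and then uses the base-change identity
$$\B^B(B,\,M\otimes_A B,\,B)\cong\B^A(A,M,A)\otimes_A B,$$
valid in each simplicial degree for any commutative $A$-algebra $M$. This reduces in each level to the computation $(M\otimes_A B)^{\otimes_B p}\cong M^{\otimes_A p}\otimes_A B$, which is straightforward since everything is commutative. Combined with the inductive hypothesis $\HH^{[n-1]}(B)\simeq\HH^{[n-1]}(A)\otimes_A B$ and the flatness of $A\to B$ (so that $\otimes_A B$ commutes with taking homotopy groups), this yields $\HH^{[n]}_*(B)\cong\HH^{[n]}_*(A)\otimes_A B$. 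Part (c) then follows from part (b) by taking $G$-invariants of the $B$-factor: a $G$-Galois extension is in particular faithfully flat, and classical Galois descent for modules gives $(M\otimes_A B)^G\cong M$ for every $A$-module $M$, which follows by tensoring the exact equalizer sequence $0\to A\to B\rightrightarrows\prod_{g\in G}B$ (encoding $B^G=A$) with $M$ over $A$ and using the flatness of $A\to B$ together with the fact that finite products commute with the relevant tensor products.

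The main obstacle is the homotopical bookkeeping in part (a): the inductive hypothesis at the level of homotopy groups must be upgraded to a weak equivalence $A\to\HH^{[n-1]}(A)$ of simplicial commutative $A$-algebras strong enough to be preserved by $\B^A(A,-,A)$. The cleanest way to handle this is to identify the canonical basepoint map explicitly and verify that it is preserved at each inductive stage; once this is done, standard model-categorical tools for simplicial commutative algebras control the cofibrancy and flatness needed for homotopy invariance of the bar construction. The base-change identity in (b) is combinatorial but must be carried out at the simplicial-algebra level so that the augmentations are respected, and the flatness of $A\to B$ is used throughout to pass between simplicial and homotopy-group statements.
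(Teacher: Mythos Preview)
Your proposal follows essentially the same approach as the paper: induction on $n$ via the iterative bar formula of Corollary~\ref{cor:iterated}, with the Weibel--Geller results as the base case, and part (c) deduced from part (b) by taking $G$-invariants using $B^G=A$. The only difference in presentation is that the paper phrases the inductive step for (b) at the level of homotopy groups as a flat base-change isomorphism for $\Tor$, namely $\Tor^{\HH^{[n-1]}_*(A)\otimes_A B}_*(B,B)\cong\Tor^{\HH^{[n-1]}_*(A)}_*(A,A)\otimes_A B$, while you carry out the analogous base change for the bar construction at the simplicial level before passing to homotopy groups; these are equivalent formulations of the same argument, and your version makes the homotopical bookkeeping you flag as the ``main obstacle'' somewhat more transparent.
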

\begin{proof}
The first claim follows from the second, but we also give a direct
proof: \'Etale $k$-algebras have Hochschild homology concentrated in degree
zero. Therefore Veen's spectral sequence yields
$$ \mathrm{Tor}_{p,q}^{\HH_*(A)}(A,A) \cong
\mathrm{Tor}_{p,q}^{A}(A,A) = A$$
in the $p=q=0$-spot and thus we get $\HH^{[2]}_*(A) =A$ concentrated
in degree zero. An iteration of this argument shows the claim for
arbitrary $n$.

For \'etale descent we deduce from Corollary \ref{cor:iterated} that
\begin{equation*}
\HH^{[2]}_*(B)\cong \mathrm{Tor}_{*}^{\HH^{[1]}_*(B)}(B,B) \cong
\mathrm{Tor}_{*}^{\HH_*^{[1]}(A) \otimes_A B}(A\otimes_A B,A\otimes_A B)
\cong \mathrm{Tor}_{*}^{\HH^{[1]} _*(A)}(A,A) \otimes_A B
\end{equation*}
and the latter is exactly $\HH^{[2]}_*(A)\otimes_A B$.   Note that
the maps $B = \HH_0(B) \ra
\HH_*(B)$ and $\HH_*(A) \ra \HH_*(B)$ used for the Weibel-Geller isomorphism
induce a map of graded
commutative rings $\HH_*(A) \otimes_A B \ra \HH_*(B)$, and the
argument above shows that our 
formulas for higher Hochschild homology are ring maps as well.

Iterating this argument, we get that
$\HH_*^{[n]}(B) \cong \HH_*^{[n]}(A) \otimes_A B$  for all $n$ as
graded commutative rings. 

Any $G$-Galois extension as above is in particular an \'etale
extension, so we get
$$ \HH^{[n]}_*(B) \cong \HH^{[n]}_*(A) \otimes_{A} B.$$
The $G$-action on the left hand side corresponds to the $G$-action on
the  $B$-factor on the right hand side and thus taking $G$-fixed points yields
$$  \HH^{[n]}_*(B)^G \cong \HH^{[n]}_*(A) \otimes_{A} (B^G) \cong
\HH^{[n]}_*(A) \otimes_{A} A \cong \HH^{[n]}_*(A).$$
\end{proof}

\section{Group algebras of finitely generated abelian groups}
The results of the preceding sections allow us to compute $\THH^{[n]}_*$ of
group algebras of finitely generated abelian groups over $\F_p$. If
$G$ is a finitely generated abelian group, then we
know from Section \ref{sec:splitting} that we need to determine
$\HH^{[n]}_*(\F_p[G])$ because $\THH^{[n]}_*(\F_p[G])$ is isomorphic
to the tensor product of $\THH^{[n]}_*(\F_p)$ and $\HH^{[n]}_*(\F_p[G])$. In
addition we know that $\F_p[G]$ can be written as a tensor product
$$ \F_p[G] \cong \F_p[\Z]^{\otimes r} \otimes \F_p[C_{q_1^{\ell_1}}]
\otimes \ldots \otimes \F_p[C_{q_s^{\ell_s}}]$$
where $r$ is the rank of $G$ and the $C_{q_i^{\ell_i}}$'s are the
torsion factors of $G$ for some primes $q_i$. As $\HH^{[n]}_*$ sends
tensor products to tensor products, we only have to determine the
tensor factors $\HH^{[n]}_*(\F_p[\Z])$ and $\HH^{[n]}_*(\F_p[C_{q_i^{\ell_i}}])$.

\begin{prop} \label{prop:factorsgroupalgs}
  \begin{itemize}
\item[]
  \item
For the group algebra $\F_p[\Z]\cong \F_p[x^{\pm 1}]$ we get
$$ \HH_*^{[n]}(\F_p[\Z])  \cong \F_p[x^{\pm 1}] \otimes B'_{n+1} .$$
\item
If $q$ is a prime not equal to $p$, then
$\HH^{[n]}_*(\F_p[C_{q^{\ell}}]) \cong \F_p[C_{q^{\ell}}]$ where the
latter is concentrated in homological degree zero.
\item
For $q=p$,
$$ \HH^{[n]}_*(\F_p[C_{p^\ell}]) \cong \F_p[x]/x^{p^\ell} \otimes B''_{n+1}.$$
  \end{itemize}
\end{prop}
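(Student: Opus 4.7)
The plan is to handle each of the three factors separately, in each case reducing to a calculation already established in the paper.

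For the first factor $\F_p[\Z] \cong \F_p[x^{\pm 1}]$, I would invoke \'etale descent. The localization map $\F_p[x] \to \F_p[x^{\pm 1}]$ is \'etale, so by Theorem \ref{thm:etale}(b),
$$\HH^{[n]}_*(\F_p[x^{\pm 1}]) \cong \HH^{[n]}_*(\F_p[x]) \otimes_{\F_p[x]} \F_p[x^{\pm 1}].$$
Combined with Theorem \ref{thm:kofx}, which gives $\HH^{[n]}_*(\F_p[x]) \cong \F_p[x] \otimes B'_{n+1}$, this yields the desired formula. (Alternatively, one could repeat B\"okstedt's decomposition argument directly: $\F_p[x^{\pm 1}] \otimes_{\F_p} \F_p[x^{\pm 1}] \cong \F_p[x^{\pm 1}] \otimes_{\F_p} \F_p[y^{\pm 1}]$ via $y = (x\otimes 1)(1\otimes x)^{-1}$, and iterate as in the proof of Theorem \ref{thm:kofx}; but the \'etale descent route is shorter.)

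For the second factor, with $q$ a prime different from $p$, I would note that $\F_p[C_{q^\ell}] \cong \F_p[x]/(x^{q^\ell} - 1)$. Since $\gcd(p, q^\ell) = 1$, the derivative $q^\ell x^{q^\ell - 1}$ of $x^{q^\ell} - 1$ has only $0$ as a root, and $0$ is not a root of $x^{q^\ell} - 1$, so the polynomial is separable. Hence $\F_p[C_{q^\ell}]$ is an \'etale $\F_p$-algebra, and Theorem \ref{thm:etale}(a) immediately gives $\HH^{[n]}_*(\F_p[C_{q^\ell}]) \cong \F_p[C_{q^\ell}]$ concentrated in degree zero.

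For the third factor, with $q = p$, I would exploit the Frobenius-type identity $x^{p^\ell} - 1 = (x - 1)^{p^\ell}$ valid in characteristic $p$. The change of variables $y = x - 1$ yields an isomorphism of $\F_p$-algebras $\F_p[C_{p^\ell}] \cong \fp[y]/y^{p^\ell} = \fpxl$, so Theorem \ref{thm:kofxmodpl} directly gives $\HH^{[n]}_*(\F_p[C_{p^\ell}]) \cong \fpxl \otimes B''_{n+1}$. The three arguments are largely mechanical given the preceding theorems; the only point worth double-checking is that the localization $\F_p[x] \to \F_p[x^{\pm 1}]$ really is handled by the \'etale descent framework of Theorem \ref{thm:etale} (which it is, via flat base change in the Loday construction).
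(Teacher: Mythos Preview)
Your proposal is correct and follows essentially the same route as the paper: \'etale descent along $\F_p[x]\to\F_p[x^{\pm1}]$ combined with Theorem~\ref{thm:kofx} for the first part, \'etaleness of $\F_p[C_{q^\ell}]$ over $\F_p$ for the second, and the identification $\F_p[x]/(x^{p^\ell}-1)\cong\F_p[x]/(x-1)^{p^\ell}$ together with Theorem~\ref{thm:kofxmodpl} for the third. Your write-up merely adds a little more detail (the separability check, the explicit substitution) than the paper's version.
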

\begin{proof}
The group algebra $\F_p[\Z]\cong \F_p[x^{\pm 1}]$ is \'etale over
$\F_p[x]$ and therefore by  Theorem
\ref{thm:etale} we obtain
$$\HH_*^{[n]}(\F_p[\Z])  \cong \HH_*^{[n]}(\F_p[x]) \otimes_{\F_p[x]}
\F_p[x^{\pm 1}]$$
and hence the first statement follows from Theorem \ref{thm:kofx}.

The group algebra $\F_p[C_{q^{\ell}}]$ is an \'etale algebra
over $\F_p$ for $q$ not equal to $p$, so  Theorem
\ref{thm:etale} also implies the second claim.

We know that $\F_p[C_{p^\ell}] \cong \F_p[x]/x^{p^\ell}$ because
$\F_p[x]/x^{p^\ell} - 1 =  \F_p[x]/(x-1)^{p^\ell}$.  Thus
$\HH^{[n]}_*(\F_p[C_{p^\ell}])$ is determined by Theorem
\ref{thm:kofxmodpl}.
\end{proof}
Thus if we express $G$ as
$$ G= \Z^r\times C_{p^{i_1}} \times \ldots \times C_{p^{i_a}} \times
C_{q_1^{j_1}} \times \ldots \times C_{q_b^{j_b}}$$
with $r,a,b\geq 0$, $i_s,j_t \geq 1$ and primes $q_i \neq p$, then we obtain
\begin{align*}
\THH & ^{[n]}_*(\F_p[G]) \cong
 \THH^{[n]}_*(\F_p) \otimes
\HH_*^{[n]}(\F_p[\Z]^{\otimes r} \otimes \bigotimes_{s=1}^a
\F_p[x]/x^{p^{i_s}} \otimes \bigotimes_{t=1}^b \F_p[C_{q_b^{j_b}}]) \\
\cong
& \THH^{[n]}_*(\F_p) \otimes
\bigl(\HH_*^{[n]}(\F_p[x])\otimes_{\F_p[x]}\F_p[x^{\pm 1}] \bigr) ^{\otimes r}
\otimes
\bigotimes_{s=1}^a \HH^{[n]}_*(\F_p[x]/x^{p^{i_s}})\otimes
\bigotimes_{t=1}^b \F_p[C_{q_b^{j_b}}].
\end{align*}

For instance, unravelling the definitions gives
\begin{align*}
\THH^{[2]}_*(\F_3[\Z \times \Z/6\Z]) \cong & \THH^{[2]}_*(\F_3) \otimes
\HH^{[2]}_*(\F_3[x]) \otimes_{\F_3[x]} \F_3[x^{\pm 1}] \otimes
\F_3[C_2] \otimes \HH^{[2]}_*(\F_3[x]/x^3) \\
\cong & \Lambda_{\F_3}(\epsilon y) \otimes (\F_3[x] \otimes
B'_3)\otimes_{\F_3[x]} \F_3[x^{\pm 1}] \otimes \F_3[C_2] \otimes
\F_3[x]/x^3 \otimes B''_3\\
\cong & \Lambda_{\F_3}(\epsilon y) \otimes \F_3[x^{\pm
  1}] \otimes  B'_3 \otimes   \F_3[C_2] \otimes
\F_3[x]/x^3 \otimes B''_3
\end{align*}
with $B'_3$ and $B''_3$ as explained in Definitions \ref{defn:Bnprime}
and \ref{defn:Bnprimeprime} and 
where $\epsilon y$ is a generator of degree three.
\appendix

\section{Code} \label{app:code}

Below is the Haskell code for generating possible differentials.  The code finds
all admissible words of a given length $n$ that fit into a particular portion of
the $E^2$ page and then looks for words that have consecutive degrees.  As the
shortest differential must go from an indecomposable to a primitive, we do not
generate any powers or products of words, as none of these can support a
shortest nonzero differential.

\begin{verbatim}
import System.Environment
import Data.List
import qualified Data.Set as S

main = do
  (prime:n:limit:_) <- getArgs
  putStrLn $ concat $ map pairToString
                          (possibleD (read n :: Integer)
                                     (read limit :: Integer)
                                     (read prime :: Integer))
-----------------

data VeenWord = M | E VeenWord | Rk VeenWord | Pk VeenWord
type Ppoly = [(Integer, (Integer,Integer))]

-- takes a sum and a list length and makes all lists of the length that
-- add up to at most m; this is the maximum degree of any particular
-- generator
varValueLists 0 m = [[]]
varValueLists 1 m = map (\a -> [a]) [0..m]
varValueLists n m = foldr (\l ls ->
                            let s = sum l
                            in (map (\a -> a:l) [0..m-s]) ++ ls)
                          [] (varValueLists (n-1) m)

makeKey M _ = "u"
makeKey (E w) l = "e" ++ (makeKey w l)
makeKey (Rk w) (a:as) = "r^" ++ (show a) ++ (makeKey w as)
makeKey (Pk w) (a:as) = "l^" ++ (show a) ++ (makeKey w as)
makeKey _ _ = error "Incorrect number of variables"


constantPoly n = [(n,(0,0))]
numVars = foldr (\(a,(_,c)) m -> if a == 0 || c == 0 then m
                                   else if c >= m then c else m) 0
compress p =
  let addup x [] = [x]
      addup x@(a,pair) ys@((a',pair'):l) =
        if pair == pair' then (a+a',pair):l else x:ys
  in foldr addup [] p

-- plugs in for variable number 1, shifts other variables down;
-- keep in mind that variable 3 is really the sum of three variables,v1,v2,v3
plugInV1 p v = compress $ map (\(a,(b,c)) -> if c >= 1
                                             then (a,(b+v,c-1))
                                             else (a,(b,c)))
                              p

plugInP :: Integer -> Ppoly -> Integer
plugInP prime p =
  let a ^^ n
        | n < 0 = error "Exponent must be positive"
        | n == 0 = 1
        | otherwise = a * (a ^^ (n-1))
  in if any (\(_,(_,c)) -> c /= 0) p
     then error "To plug in p you need to have no variables"
     else sum $ map (\(a,(b,_)) -> a * (prime ^^ b)) p

plugInAllVars :: Integer -> Ppoly -> [Integer] -> Integer
plugInAllVars prime p l = plugInP prime (foldl plugInV1 p l)

polyToString :: Ppoly -> String
polyToString =
  let monoToString (a,(b,c)) =
        (show a) ++ (if (b,c) == (0,0) then ""
                     else " P^{" ++ (if b /= 0 then (show b) ++ "+" else "")
                          ++ (if c /= 0
                              then "v_" ++ (show c)
                              else "") ++ "}")
  in (intercalate " + ") . (map monoToString)

addN n ((m,(0,0)):l) = (m+n,(0,0)):l
addN n l = (n,(0,0)):l
shiftBy1 = map (\(a,(b,c)) -> (a,(b+1,c)))
shiftByVar = map (\(a,(b,c)) -> (a,(b,c+1)))

degree :: VeenWord -> Ppoly
degree M = constantPoly 2
degree (E x) = addN 1 (degree x)
degree (Rk x) = shiftByVar $ addN 1 $ degree x
degree (Pk x) = shiftByVar $ addN 2 $ shiftBy1 $ degree x

bidegree :: VeenWord -> (Ppoly, Ppoly)
bidegree M = (constantPoly 0, constantPoly 2)
bidegree (E x) = (constantPoly 1, degree x)
bidegree (Rk x) = (shiftByVar $ constantPoly 1, shiftByVar $ degree x)
bidegree (Pk x) = (shiftByVar $ constantPoly 2,
                   shiftByVar $ shiftBy1 $ degree x)


makeAdmissibleWords n
  | n < 1 = error "makeAdmissibleWords needs positive integer"
  | n == 1 = [M]
  | otherwise =
      let words :: VeenWord -> [VeenWord] -> [VeenWord]
          words M l = (E M):l
          words w@(E _)  l = (Rk w):l
          words w@(Rk _) l = (E w):(Pk w):l
          words w@(Pk _) l = (E w):(Pk w):l
      in foldr words [] (makeAdmissibleWords (n-1))

--this takes a word and a pair of limits (which must be positive integers)
--and a prime p
--and generates all versions of the word and all powers of each version that
--will fit inside those limits
makeVersions :: VeenWord -> Integer -> Integer -> [(String,(Integer,Integer))]
makeVersions w maxdeg prime =
  let maxpow = (log (fromIntegral maxdeg))/(log (fromIntegral prime))
      estimate_bounds = floor(maxpow) :: Integer

      -- note that hom has at most one variable, which must have the same
      -- value as the first variable in inter
      (hom, inter) = bidegree w
      possibleVarValues = varValueLists (numVars inter) estimate_bounds
  in map (\l -> (makeKey w l, plugInAllVars prime hom   l,
                              plugInAllVars prime inter l))
         possibleVarValues

generateAllElts n maxdeg prime = concat $
                                  map (\w -> makeVersions w maxdeg prime)
                                      (makeAdmissibleWords n)

consecutivePairs l =
  [ (a,b,x-x') | a@(_,(x,y)) <- l, b@(_,(x',y')) <- l, x+y == x'+y'+1, x-x'>1]

possibleD n x prime = consecutivePairs $ generateAllElts n x prime

pairToString (a,b,deg) =
  let showThis (k,(x,y)) = k ++ (show (x,y))
  in (showThis a) ++ " ---> " ++ (showThis b) ++ ": " ++ (show deg) ++ "\n"
\end{verbatim}


\begin{thebibliography}{9999999}

\bibitem[AG]{AG}
Maurice Auslander, Oscar Goldman,
The Brauer group of a commutative ring, Trans. Amer. Math. Soc. 97
(1960), 367--409.

\bibitem[Ba]{Ba}
Maria Basterra, 
Andr\'e-Quillen cohomology of commutative S-algebras, 
J. Pure Appl. Algebra 144 (1999), no. 2, 111--143. 

\bibitem[B]{B}
Marcel B\"okstedt, The topological Hochschild homology
  of $\mathbb{Z}$ and $\mathbb{Z}/p\mathbb{Z}$, preprint

\bibitem[BCD]{bcd}
Morten Brun, Gunnar Carlsson, Bj{\o}rn Ian Dundas,
Covering homology,
Adv. Math. 225 no. 6 (2010), 3166--3213.

\bibitem[C]{C}
Henri Cartan, 
D\'etermination des alg\`ebres $H_*(\pi, n; Z_p)$ et $H^*(\pi, n
;Z_p)$, $p$ premier impair, Exp. No. 9, 10 p., S\'eminaire Henri
Cartan, 7 no. 1, 1954-1955, Alg\`ebre d'Eilenberg-Maclane et
homotopie (available at \url{http://www.numdam.org/ARCHIVE/SHC/SHC_1954-1955__7_1/SHC_1954-1955__7_1_A9_0/SHC_1954-1955__7_1_A9_0.pdf}). 

\bibitem[CE]{CE}
Henri Cartan, Samuel Eilenberg,
Homological algebra. With an appendix by David A. Buchsbaum. Reprint
of the 1956 original. Princeton Landmarks in Mathematics. Princeton
University Press, Princeton, NJ, (1999), xvi+390 pp.

\bibitem[EKMM]{EKMM}
Anthony D.~Elmendorf, Igor Kriz, Michael Mandell, J.~Peter May,
Rings, modules, and algebras in stable homotopy theory. With an
appendix by M. Cole. Mathematical Surveys and Monographs, 47. American
Mathematical Society, Providence, RI, (1997), xii+249 pp.

\bibitem[F]{F}
Benoit Fresse, Iterated bar complexes of $E$-infinity algebras and
homology theories,  Algebr. Geom. Topol. 11 (2011), no. 2, 747???838.

\bibitem[GTZ]{gtz}
Gregory Ginot, Thomas Tradler, Mahmoud Zeinalian,
Higher Hochschild cohomology, Brane topology and centralizers of
$E_n$-algebra maps, preprint arXiv:1205.7056.

\bibitem[GH]{GH}
Paul G.~Goerss, Michael J.~Hopkins, 
Moduli spaces of commutative ring spectra, in: Structured ring
spectra, 
London Math. Soc. Lecture Note Ser., 315, Cambridge Univ. Press,
Cambridge (2004),  151--200. 

\bibitem[La]{lazarev}
Andrey Lazarev,
Cohomology theories for highly structured ring spectra. In: Structured
ring spectra, London Math. Soc. Lecture Note Ser., 315,
Cambridge Univ. Press, Cambridge, (2004), 201--231.



\bibitem[LL]{LL}
Michael Larsen, Ayelet Lindenstrauss,
Cyclic homology of Dedekind domains, K-Theory 6 (1992), no. 4, 301--334.

\bibitem[LR]{lr}
Muriel Livernet, Birgit Richter,
An interpretation of $E_n$-homology as functor homology. Math. Z. 269
(2011), no. 1--2, 193--219.

\bibitem[P]{P}
Teimuraz Pirashvili, Hodge decomposition for higher order Hochschild
homology, Ann. Sci. \'Ecole Norm. Sup. (4) 33 (2000), no. 2, 151--179.

\bibitem[RW]{RW}
Alan Robinson, Sarah Whitehouse, Operads and $\Gamma$-homology
of commutative rings, Math. Proc. Cambridge Philos. Soc. 132 (2002),
no. 2, 197--234. 

\bibitem[R]{Ro}
Alan Robinson, 
Gamma homology, Lie representations and $E_\infty$ multiplications,  
Invent. Math. 152 (2003), no. 2, 331--348. 

\bibitem[V1]{Vthesis}
Detecting Periodic Elements in Higher Topological Hochschild
Homology, PhD Thesis 2013, University of Bergen. 

\bibitem[V2]{V} Torleif Veen,
Detecting Periodic Elements in Higher Topological Hochschild
Homology, preprint available as arXiv:1312.5699, v2. 

\bibitem[WG]{WG}
Charles A.~Weibel,  Susan C.~Geller,
\'Etale descent for Hochschild and cyclic
homology. Comment. Math. Helv. 66 (1991), no. 3, 368--388.

\end{thebibliography}
\end{document}